\documentclass[leqno]{amsproc}
\usepackage{amsmath,amsthm,amssymb,amsfonts}
\usepackage{graphicx}
\usepackage{color}
\usepackage[active]{srcltx}
\usepackage{multicol}
\usepackage{soul}

\newtheorem{theorem}{Theorem}[section]
\newtheorem{lemma}[theorem]{Lemma}

\newtheorem{proposition}[theorem]{Proposition}
\newtheorem{notation}[theorem]{Notation}

\newtheorem{corollary}[theorem]{Corollary}
\theoremstyle{definition}
\newtheorem{definition}[theorem]{Definition}
\newtheorem{remark}[theorem]{Remark}
\numberwithin{equation}{section}


\def\N{{\mathbb N}}

\def\R{{\mathbb R}}

\def\C{{\mathbb C}}
\def\co{\hbox{\rm co}}

\def\ext{\hbox{\rm Ext}}
\def\llll{{\longrightarrow}}
\def\sep{{ \ \ }}
\def\sem{{\ \ \ \ }}
\def\seg{{\ \ \ \ \ \ }}

\def\com#1{{``#1''}}
\hoffset=-20mm
\voffset=-15mm
\setlength{\textwidth}{170mm}
\setlength{\textheight}{220mm}

\title[The Bishop-Phelps-Bollob{\'a}s property] {A basis of $\R ^n$ with  good isometric properties \\ and some applications to denseness \\ of norm attaining operators}

\author[M.D. Acosta]{Mar\'{\i}a D. Acosta}
\address{Universidad de Granada, Facultad de Ciencias, Departamento de An\'{a}lisis Matem\'{a}tico, 18071 Granada, Spain}
\email{dacosta@ugr.es}
\author[J.L. D{\'a}vila]{Jos{\'e} L. D{\'a}vila}
\address{Departamento de Matem\'{a}tica,  Facultad de Ciencias, Universidad de Los Andes, M{\'e}rida, 5111 Venezuela}
\email{jolu6@correo.ugr.es, jldavila@ula.ve}

\thanks{The  first  author was  supported  by Junta de Andaluc\'{\i}a grant  FQM--185  and also by Spanish MINECO/FEDER grant  MTM2015-65020-P. The second named   author was  partially supported  by Junta de Andaluc\'{\i}a grant  FQM--185.}

\begin{document}
{\large

\begin{abstract}
	We characterize real Banach spaces $Y$ such that the pair $(\ell_\infty ^n, Y)$ has the Bishop-Phelps-Bollob\'as property for operators. To this purpose it is essential the use of an appropriate
	 basis of the domain space $\R^n$. As a consequence  of the mentioned  characterization, we provide examples of spaces $Y$ satisfying such property. For instance, finite-dimensional spaces, uniformly convex spaces,
	  uniform algebras and $L_1(\mu)$  ($\mu$ a positive measure) satisfy the previous property.
\end{abstract}

\maketitle
\baselineskip=.65cm

\section{Introduction}
   
 In 1961 Bishop and Phelps showed that for any  Banach space the subset of  norm attaining functionals is dense in the topological dual \cite{BP}. These authors  posed the problem of possible extensions of such result to operators. In 1963 the pioneer work of Lindenstrauss provided some results  for  the vector valued case \cite{Lin}. He showed that in general the set of norm attaining operators is not dense in the corresponding space of bounded and linear operators and also proved some positive results. The survey \cite{Ac1} contains some interesting results about the topic and the state of the art until 2006.
 
 Throughout this paper $X^*$ is the topological dual of a normed space $X$. By $B_{X}$ and $S_{X}$ we denote the closed unit ball and the unit sphere of $X$, respectively. The symbol $L(X,Y)$ denotes the space of linear and bounded operators between two normed spaces $X$ and $Y$, endowed with the usual operator norm.
 
This paper deals with vector valued versions of the following assertion proved by Bollob{\'a}s in 1970 \cite{Bol}.

{\it Bishop-Phelps-Bollob{\'a}s Theorem} (\cite[Corollary 2.4]{CKMMR}). Let $X$ be a Banach space and $0< \varepsilon < 1$. Given $x \in B_X$  and $x^* \in S_{X^*}$ with $\vert 1- x^* (x) \vert < \frac{\varepsilon ^2 }{2}$, there are elements $y \in S_X$ and $y^* \in S_{X^*}$  such that $y^* (y)=1$, $\Vert y-x \Vert < \varepsilon$ and $\Vert y^* - x^* \Vert < \varepsilon $.
\vskip3mm

Let us mention that the previous  result has been applied to obtain properties for numerical ranges  of operators (see for instance \cite[\S 17]{BoDu}). Bishop-Phelps-Bollob{\'a}s theorem  and vector valued versions of Bishop-Phelps theorem  motivated the study of extensions of Bollob\'as  result 
for operators. In 2008 Acosta, Aron, Garc{\'i}a and Maestre introduced the following notion.
 
 \begin{definition} \label{def-BPBp}
	(\cite[Definition 1.1]{AAGM1}).  Let $X$  and $Y$ be both either real or complex Banach spaces. The pair $(X,Y )$ is said to have the Bishop-Phelps-Bollob{\'a}s property for operators (BPBp) if for every $  0 < \varepsilon  < 1 $  there exists $ 0< \eta (\varepsilon) < \varepsilon $ with the following property:
	\newline
	If $T\in S_{L(X,Y)}$ and $x_0 \in S_X$ satisfy $ \Vert T (x_0) \Vert > 1 - \eta (\varepsilon)$, then 
	there exist  $S \in S_{L(X,Y )}$ and $u_0 \in S_X$ satisfying the following conditions
	$$
	\Vert S (u_0) \Vert =1, \sem \Vert u_0- x_0 \Vert < \varepsilon \seg \text{and} 
	\sem \Vert S-T \Vert < \varepsilon.
	$$
\end{definition}

The same authors provided the first results on the topic showing that  the pair $(X,Y)$ has the BPBp when $X$ and $Y$ are finite-dimensional spaces  \cite[Proposition 2.4]{AAGM1}. 
%
The same result also holds  for any Banach space $X$ in case that    the Banach space $Y$ has a certain isometric property (called property $\beta$ of Lindenstrauss) \cite[Theorem 2.2]{AAGM1}. For instance, polyhedral finite-dimensional spaces, $c_{0}$ and $\ell_{\infty}$ have such property. There is also  a characterization of the spaces $Y$  such that the pair $(\ell_{1},Y)$ has the BPBp \cite[Theorem 4.1]{AAGM1}.

In case that $Y$ has the Radon-Nikod{\'y}m property, Choi and Kim proved that the pair $(L_{1}(\mu),Y)$ has the BPBp for any  $\sigma$-finite measure $\mu$   \cite[Theorem 2.2]{CK2}.  The result stating that  for any positive measures $\mu $ and  $\nu$ the pair $(L_1(\mu), L_1 (\nu))$  has the Bishop-Phelps-Bollob\'as property for operators is due to Choi, Kim, Lee and Mart{\'i}n   \cite[Theorem 3.1]{CKLM}.  The same authors also proved that  the pair  $(L_1(\mu), L_\infty (\nu))$ has the BPBp  for
	any  $\mu$ whenever   $\nu$ is a
	 localizable measure   \cite[Theorem 4.1]{CKLM}. This  result  was shown  before for the pair $(L_1(\mu), L_\infty [0,1])$  by Aron, Choi, Garc{\'i}a  and Maestre \cite{ACGM}. 
In case that $X$ is uniformly convex, then the pair $(X,Y)$ has the Bishop-Phelps-Bollob{\'a}s property for operators  for any Banach space $Y$, a result proved independently in  \cite[Theorem 3.1]{KLc} and \cite[Theorem 2.2]{ABGMt}.

Despite the fact that many authors proved  interesting results about this topic in the last years, it is not known whether or not the pair $(c_{0},\ell_{1})$ has the BPBp in the real case. In case that the domain is $c_0$, Kim proved that the pair  $(c_{0},Y)$ has the BPBp whenever $Y$ is a uniformly convex space \cite[Corollary 2.6]{KiI}. As a consequence of results due to Aron, Cascales and Kozhushkina, the pair $(X,C(K))$ has the BPBp if $X$ is an Asplund space  \cite[Corollary 2.6]{ACK}. Indeed this result was 
 extended to uniform algebras 
 by Cascales, Guirao and Kadets  \cite[Theorem 3.6]{CGK}.  As a consequence, the pair $(c_0,Y)$ has the Bishop-Phelps-Bollob\'as for operators for any uniform algebra $Y$. It is also known
   that in the real case the pair $(C(K),C(S))$ has the BPBp for any compact Hausdorff  spaces $K$ and $S$ \cite[Theorem 2.5]{ABCCetc}.  
 Kim and Lee showed  that the pair $(C(K),Y)$ has the BPBp whenever $Y$ is a uniformly convex space,   for any compact Hausdorff space $K$ 
 \cite[Theorem 2.2]{KL}, a result previously obtained in case that the domain is $L_{\infty}(\mu)$     for any positive measure $\mu$ \cite[Theorem 5]{KLL}.
  In
    the complex case  Acosta proved that  
  the pair $(C_{0}(L),Y)$ has the BPBp whenever $Y$ is $\C$-uniformly convex,   for any locally compact Hausdorff  space  $L$ \cite[Theorem 2.4]{AcB}. As a consequence,  in the complex case $(c_0,L_{1}(\mu))$ has the BPBp for any positive measure $\mu$.
However, as we already mentioned above, in the real case  it is an open problem whether or not the parallel result   holds true  even for the pair $(c_{0},\ell_{1})$. Let us point out that the set of  norm attaining operators from $c_0$ to $\ell_1$ is dense in $L(c_{0},\ell_{1})$  since  every operator from $c_0 $ to $\ell_1$ is compact and the usual basis of $c_0$ is monotone and shrinking.

As a consequence of \cite[Theorem 2.1]{ACKLM}, if the pair $(c_{0},\ell_{1})$ has the BPBp then
the pairs $(\ell_\infty ^n, \ell_1)$ satisfy the BPBp \com{uniformly} for every $n$.
It is not difficult to check that the  converse  also holds.  Those facts  motivated  our \com{finite-dimensional} approach.  So we began to study for which   natural numbers $n$  the pair  $(\ell_\infty ^n, \ell_1)$ has the BPBp. Indeed we posed the  more general question whether or not, given a space $Y$, the pair $(\ell_\infty ^n,Y)$  has the BPBp.
For $n=1$  this condition is trivially satisfied  since the pair $(\R,\R)$ has the BPBp. For $n=2$, there exists a characterization of the  Banach spaces $Y$  such that $(\ell_\infty ^2, Y)$ has the BPBp. In case $n=3$ and $n=4$ there are two papers containing  characterizations of the spaces $Y$ such that the pair $(\ell_\infty ^n, Y)$ has the BPBp (see \cite[Theorem 2.9]{ABGKM2}  and \cite[Theorem 3.3]{ADS}).   The space $Y=\ell_1$ satisfies that assertion
\cite[Theorem 3.3 and Corollary 4.8]{ADS}.
 
The goal of this paper is to  characterize  the  Banach spaces $Y$ such that the pair $(\ell_\infty ^n, Y)$ has the BPBp for  some  fixed positive integer  $n$ (Theorem \ref{teo-char}). For this purpose we introduce  in Section 2 an isometric property, the so-called approximate  hyperplane  sum property for $\ell_\infty^n$ (AHSp-$\ell_\infty^n$ for short, see Definition \ref{def-AHSPLn}). We also provide several conditions that are equivalent to the  AHSp-$\ell_\infty^n$ (Proposition \ref{pro-char}).   The main result of Section 3 states that  the pair $(\ell_\infty ^n,Y) $ has the Bishop-Phelps-Bollob\'{a}s property for operators if and only if $Y$ has the AHSp-$\ell_\infty^n$.    Section 4 contains 
examples of spaces  having the AHSp-$\ell_\infty^n$. As a consequence of previous results and the characterization provided in Section 3,  finite-dimensional spaces, uniformly convex spaces and   uniform algebras enjoy  AHSp-$\ell_\infty^n$, for any natural number $n$.
 In the case of $\ell_1$ we prove directly that this space also enjoys such isometric property  (see Theorem \ref{l1-AHSPLn}).
 From this we easily deduce that the space $L_1(\mu)$ also satisfies the 
AHSp-$\ell_\infty^n$,  for any natural number $n$ and any positive measure $\mu $ (Corollary  \ref{cor-L1-AHSP-lin}). As a consequence,  each space  $Y$  belonging to one of the classes mentioned  above satisfies that the pair $(\ell_\infty ^n , Y ) $ has the BPBp, for any natural number $n$.

Now we will explain some of the key ideas used throughout this paper.
One of the  first basic ideas is to identify the Banach spaces $L(\ell_\infty ^n,Y)$ and $Y^n$,  endowed with some appropriate norm (Proposition \ref{prop-ident}). Under this identification the   closed unit ball of  $L(\ell_\infty ^n,Y)$ is associated to the set denoted  by $M_{Y}^n$ (Notation \ref{not-MnY}).  For  this identification 
we use a  \com{simple} basis (denoted by $\mathcal{B}_n$)
of the domain space $\R^n$
 whose  elements are extreme points of  $ B_{\ell_\infty ^n}$. This basis also satisfies, for instance,  that  any extreme point of $ B_{\ell_\infty ^n}$  may be expressed easily in terms of  $\mathcal{B}_n$. 
  In this way we reformulate  the BPBp
for the pair $(\ell_\infty ^n,Y)$ in terms of an  intrinsec condition on $Y$
involving the set $M_{Y}^n$ (Theorem \ref{teo-char}). That is, the space $Y$ has the   approximate  hyperplane  sum property for $\ell_\infty^n$.   In order to prove Theorem \ref{teo-char} it is essential  to characterize  the isometric property AHSp-$\ell_\infty ^n$ 
in differente ways (Proposition \ref{pro-char}). Let us point out that the proof of Theorem \ref{teo-char} is simpler   than the previous characterization for the particular case of $n=4$  provided in \cite{ADS}.

In order to obtain applications of the main result,  we used Theorem \ref{teo-char}  and the known classes of  spaces $Y$ such that $(\ell_\infty ^n, Y)$ has the BPBp. Also   condition 3) in Proposition \ref{pro-char}
is quite useful  and indeed this is the key idea used to prove that $\ell_1$ satisfies the AHSp-$\ell_\infty ^n$. The proof of this fact is technical and long, and uses induction.  In any case that proof  for the general case  is much easier that the proof of the particular case $n=4$
given in \cite{ADS}.  From the result for $\ell_1$  we easily deduce  that $L_1 (\mu)$ also satisfies the AHSP-$\ell_\infty ^n$, for any positive measure $\mu$ and any positive integer $n$. Hence  the pair  $(\ell_\infty ^n, L_1(\mu))$ satisfies the Bishop-Phelps-Bollob\'as property for operators.

Throughout this paper   we  consider only \textit{real} normed spaces.  The symbol $Y$ denotes  a real normed space and $n$ is a  fixed nonnegative integer. 
We denote by $\ell _ \infty ^n$ the space $\R^n,$ endowed with the norm given by $\Vert x \Vert = \max \{ \vert x_i \vert : i \le n \}.$



\section{The approximate hyperplane sum property for $\ell_\infty ^n$}

The goal of 
this section is to  introduce and characterize  the  approximate hyperplane sum property for $\ell_\infty ^n$.    For such purpose  we begin with some  notation and    simple technical results.  

Firstly  we introduce sets that   will play an essential role in the characterization of the Bishop-Phelps-Bollob\'{a}s property for operators for   a pair $(\ell_\infty ^n, Y)$.
We define the sets $\mathcal{I}_n$ and $\mathcal{P}_n$ by
$$ 
\mathcal{I}_n:=\{ (i_1, \ldots ,i_k)\in \{1,\ldots, n\}^k : k \sep \text{odd}, \sep k\le n \sep \text{and} \sep i_j < i_{j+1},  \, \forall   j<k \}
$$
and  
$$
\mathcal{P}_n:=\{ (i_1, \ldots ,i_k)\in \{1,\ldots, n\}^k : k \sep \text{even}, \sep k\le n \sep \text{and} \sep i_j < i_{j+1},  \,  \forall   j<k \}.
$$

\begin{notation}
		\label{not-MnY}
	If $Y$ is a Banach space and $n$ is a positive integer, we define
	$$
	M^n_Y:= \bigg\{(y_i)_{i \le n}  \in Y^n:  \sum_{j=1}^{k}(-1)^{j+1}y_{i_j}  \in B_Y, \sep \forall   (i_1, \ldots ,i_k) \in \mathcal{I}_n  \bigg\}.
	$$
It is clear that $M^n_Y$ is a subset of $(B_Y)^n$. For each  $1\le i \le n$ we define the vector $v_i^{n}\in \ell_\infty^n $ as follows
  $$
    v_i^{n}(j) :=
    \begin{cases}
    -1           & \mbox{if} \sep 2\le j \le i  \\
    1  & \mbox{if} \sep  j=1 \sep \text{or} \sep i+1 \le j \le n.
    \end{cases}
    $$
    We also denote  by $\mathcal{B}_n, E_{1}^n$ and  $O_n$ the sets given by
$$
\mathcal{B}_n:=\{ v_i^{n}: 1 \le i \le n\},
$$
$$
E_1^{n}:= \{v\in \ell_\infty^n: v(1)=\| v \|=1 \}
$$
and 
$$
O_n:=\{v \in E_1^{n}: v(i)\le v(i+1) \sep \forall 2\le i < n \}.
$$
\end{notation}

The set  $\mathcal{B}_n$ plays an essential role in this paper.  Besides  it gives a nontrivial example of an element of $M^n_{\ell_\infty^n}$  with  interesting properties such as those that  we   prove next.

\begin{lemma}
	\label{B-base}
	If  $ n \ge 2$ and  $v\in \R^n$ then $v$ can be expressed as follows   
	$$v= \frac{v(1)+v(2)}{2} v_1^{n} + \sum _{i=2}^{n-1} \frac{v(i+1)- v(i)}{2} v_i^{n} +  \frac{v(1)-v(n)}{2} v_n^{n}.$$
	In particular, then set  $\mathcal{B}_n$ is a basis for $\R^n$.
\end{lemma}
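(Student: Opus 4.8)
The plan is to verify the displayed identity directly, coordinate by coordinate, and then read off the basis claim from a dimension count. I would abbreviate the coefficients as $c_1 := \frac{v(1)+v(2)}{2}$, $c_i := \frac{v(i+1)-v(i)}{2}$ for $2 \le i \le n-1$, and $c_n := \frac{v(1)-v(n)}{2}$, so that the right-hand side is $w := \sum_{i=1}^{n} c_i\, v_i^{n}$. The one preliminary computation worth isolating is that the full sum of coefficients telescopes:
$$
\sum_{i=1}^{n} c_i = \frac{v(1)+v(2)}{2} + \frac{v(n)-v(2)}{2} + \frac{v(1)-v(n)}{2} = v(1).
$$

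Next I would compute the coordinates of $w$. Since $v_i^{n}(1) = 1$ for every $i$, the first coordinate is $w(1) = \sum_{i=1}^{n} c_i = v(1)$. For $2 \le j \le n$, the piecewise definition of $v_i^{n}$ gives $v_i^{n}(j) = 1$ when $i < j$ (the index $i = 1$ included) and $v_i^{n}(j) = -1$ when $i \ge j$, so
$$
w(j) = \sum_{i=1}^{j-1} c_i - \sum_{i=j}^{n} c_i = 2\sum_{i=1}^{j-1} c_i - \sum_{i=1}^{n} c_i = 2\sum_{i=1}^{j-1} c_i - v(1).
$$
The partial sum telescopes as well: $\sum_{i=1}^{j-1} c_i = \frac{v(1)+v(2)}{2} + \frac{v(j)-v(2)}{2} = \frac{v(1)+v(j)}{2}$, where the middle sum is empty (hence $0$) when $j = 2$. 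Substituting this gives $w(j) = v(1) + v(j) - v(1) = v(j)$. As $w$ and $v$ agree in every coordinate, $v = w$, which is precisely the claimed formula.

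For the final assertion, the formula exhibits every $v \in \R^n$ as a linear combination of the $n$ vectors in $\mathcal{B}_n$, so $\mathcal{B}_n$ spans $\R^n$; having exactly $\dim \R^n = n$ elements, it is therefore a basis. I do not expect a genuine obstacle here: the only points requiring care are the case split in the definition of $v_i^{n}(j)$ (the separate treatment of $j=1$ and the empty telescoping sum when $j=2$) and the bookkeeping of the two telescoping cancellations.
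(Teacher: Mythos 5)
Your proof is correct and follows essentially the same route as the paper: a direct coordinate-by-coordinate verification of the identity via the telescoping of the coefficients, with the basis claim then following from having $n$ spanning vectors in $\R^n$. The only cosmetic difference is that you package the computation for $j\ge 2$ as $w(j)=2\sum_{i<j}c_i-\sum_{i\le n}c_i$, whereas the paper splits the full sum into the parts before and after $j$ and telescopes each separately; both handle the edge cases ($j=1$, empty sums) correctly.
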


\begin{proof}
Let $v\in \R^n$ and $1\le j \le n$. Firstly suppose that  $j=1$; in this case we have 
\begin{eqnarray*}
v(1)&=&\frac{v(1)+v(2)}{2} + \frac{v(n)-v(2)}{2}+ \frac{v(1)-v(n)}{2}\\
&=&\frac{v(1)+v(2)}{2} + \sum _{i=2}^{n-1} \frac{v(i+1)- v(i)}{2} +  \frac{v(1)-v(n)}{2}\\
&=&\frac{v(1)+v(2)}{2} v_1^{n}(1) + \sum _{i=2}^{n-1} \frac{v(i+1)- v(i)}{2} v_i^{n}(1) +  \frac{v(1)-v(n)}{2} v_n^{n}(1).\\
\end{eqnarray*}

In the case that $j>1$ we have
\begin{eqnarray*}
v(j)&=&\frac{v(1)+v(2)}{2} + \frac{-v(2)+v(j)}{2}+ \frac{-v(j)+v(n)}{2}(-1)+ \frac{-v(1)+v(n)}{2}\\
&=&\frac{v(1)+v(2)}{2} + \sum _{i=2}^{j-1} \frac{v(i+1)- v(i)}{2}- \sum _{i=j}^{n-1} \frac{v(i+1)- v(i)}{2} -  \frac{v(1)-v(n)}{2}\\
&=&\frac{v(1)+v(2)}{2} v_1^{n}(j) + \sum _{i=2}^{n-1} \frac{v(i+1)- v(i)}{2} v_i^{n}(j) +  \frac{v(1)-v(n)}{2} v_n^{n}(j).
\end{eqnarray*}
\end{proof}

It is clear that $\mathcal{B}_n\subset O_n$. Since $O_n$ is convex we get the following inclusion
$$
\co(\mathcal{B}_n)\subset O_n,
$$
where   $ \co (A)$ denotes the convex hull of a set $A \subset \R ^n$. 
From  Lemma  \ref{B-base} we  obtain easily that every element of  $O_n$ is a convex combination of elements in  $\mathcal{B}_n$. As a consequence, we have the following result.

\begin{lemma}
	\label{coB=O}
	For each $n\in\N$ we have that
	$$\co(\mathcal{B}_n)=O_n.$$
\end{lemma}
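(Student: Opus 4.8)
The plan is to establish the two inclusions $\co(\mathcal{B}_n) \subseteq O_n$ and $O_n \subseteq \co(\mathcal{B}_n)$ separately. The first inclusion is essentially already observed in the text: each $v_i^n$ lies in $O_n$ (its first coordinate and norm are both $1$, and the remaining coordinates are nondecreasing since they go $-1,\dots,-1,1,\dots,1$), and $O_n$ is convex because it is cut out of the sphere portion $E_1^n$ by the linear inequalities $v(i) \le v(i+1)$ for $2 \le i < n$ together with the constraints $v(1)=1$ and $-1 \le v(j) \le 1$, all of which are preserved under convex combinations. So $\co(\mathcal{B}_n) \subseteq O_n$ is immediate.

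The real content is the reverse inclusion $O_n \subseteq \co(\mathcal{B}_n)$, and here I would invoke Lemma \ref{B-base} directly. Given $v \in O_n$, write $v$ in the basis $\mathcal{B}_n$ using the formula from Lemma \ref{B-base}: the coefficients are $\lambda_1 = \frac{v(1)+v(2)}{2}$, $\lambda_i = \frac{v(i+1)-v(i)}{2}$ for $2 \le i \le n-1$, and $\lambda_n = \frac{v(1)-v(n)}{2}$. I must check that these are nonnegative and sum to $1$. Nonnegativity of $\lambda_i$ for $2 \le i \le n-1$ is exactly the defining condition $v(i) \le v(i+1)$ of $O_n$; nonnegativity of $\lambda_1$ follows from $v(1)=1$ and $v(2) \ge -1$; nonnegativity of $\lambda_n$ follows from $v(1)=1$ and $v(n) \le 1$. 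For the sum, telescoping gives
$$
\lambda_1 + \sum_{i=2}^{n-1}\lambda_i + \lambda_n = \frac{v(1)+v(2)}{2} + \frac{v(n)-v(2)}{2} + \frac{v(1)-v(n)}{2} = v(1) = 1.
$$
Hence $v$ is a convex combination of $\mathcal{B}_n$, proving $O_n \subseteq \co(\mathcal{B}_n)$.

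One should also handle the degenerate small cases: for $n=1$ the set $\mathcal{B}_1 = \{v_1^1\} = \{(1)\}$ and $O_1 = E_1^1 = \{(1)\}$, so equality is trivial; for $n=2$ the empty middle sum is fine and the formula reduces to $v = \frac{v(1)+v(2)}{2}v_1^2 + \frac{v(1)-v(2)}{2}v_2^2$, consistent with Lemma \ref{B-base} as stated for $n \ge 2$. The main (and only mild) obstacle is just bookkeeping the indices in the telescoping sum and making sure the convexity constraints of $O_n$ line up exactly with the signs of the basis coefficients; there is no genuine difficulty, since Lemma \ref{B-base} already does the linear-algebra work.
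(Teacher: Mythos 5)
Your proof is correct and follows essentially the same route as the paper: the inclusion $\co(\mathcal{B}_n)\subseteq O_n$ from $\mathcal{B}_n\subset O_n$ plus convexity of $O_n$, and the reverse inclusion by reading off the coefficients from Lemma \ref{B-base} and checking they are nonnegative and sum to $1$. The paper leaves that verification as "easily obtained"; you have simply written it out, so nothing further is needed.
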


By the previous equality  and the definition of $O_n$ the convex hull of  $\mathcal{B}_n$  has a very nice description.  There is  another reason that makes the  set $\mathcal{B}_n$ special.
 These base also have the property that the image under the  projection onto the first $n$ coordinates of $\mathcal{B}_{n+1}$ coincides with $ \mathcal{B}_{n}$. This fact  will be used to prove by induction  that the elements  in $\ext(E_1^{n})=\{ v \in E_1^{n}: |v(i)|=1  \sep \forall 1<i\le n\} $ can be expressed easily in terms of the basis $\mathcal{B}_n$.

\begin{lemma}
	\label{ExE1}
	For each natural number $n$  we have that
	$$
	\ext(E_1^{n})=\bigg\{ \sum_{j=1}^{k}(-1)^{j+1}v_{i_j}^{n}:   (i_1, \ldots ,i_k) \in \mathcal{I}_n\bigg \}.
	$$
\end{lemma}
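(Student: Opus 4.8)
The plan is to prove the two inclusions of the claimed identity separately, using throughout that $E_1^{n}$ is the exposed face $\{v\in B_{\ell_\infty^n}:v(1)=1\}$ of $B_{\ell_\infty^n}$; hence $\ext(E_1^n)$ consists exactly of the extreme points of $B_{\ell_\infty^n}$ lying in $E_1^n$, that is, $\ext(E_1^n)=\{v\in\R^n:v(1)=1\ \text{and}\ |v(i)|=1\ \text{for all}\ i\}$, as already recorded above. For the inclusion $\supseteq$ I would compute directly: fix $(i_1,\dots,i_k)\in\mathcal{I}_n$ and put $w:=\sum_{j=1}^k(-1)^{j+1}v_{i_j}^{n}$. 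Since $k$ is odd and $v_i^{n}(1)=1$ for every $i$, we get $w(1)=\sum_{j=1}^k(-1)^{j+1}=1$. For $\ell\ge 2$, the monotonicity $i_1<\dots<i_k$ yields $p\in\{0,\dots,k\}$ with $i_p<\ell\le i_{p+1}$ (with the conventions $i_0:=0$, $i_{k+1}:=+\infty$), so $v_{i_j}^{n}(\ell)=1$ for $j\le p$ and $v_{i_j}^{n}(\ell)=-1$ for $j>p$; using again $\sum_{j=1}^k(-1)^{j+1}=1$ this gives $w(\ell)=2\sum_{j=1}^p(-1)^{j+1}-1\in\{-1,1\}$. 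Hence $w\in\ext(E_1^n)$.

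For the inclusion $\subseteq$ I would argue by induction on $n$, the case $n=1$ being trivial. The tool for the inductive step is the compatibility of the bases recorded above, in the precise form that $v_i^{n+1}$ restricted to the first $n$ coordinates equals $v_i^{n}$ for $1\le i\le n$, that $v_n^{n+1}$ and $v_{n+1}^{n+1}$ both restrict to $v_n^{n}$, and that $v_i^{n+1}(n+1)=1$ for $i\le n$, while $v_n^{n+1}(n+1)=1$ and $v_{n+1}^{n+1}(n+1)=-1$. Let $v\in\ext(E_1^{n+1})$, let $v'\in\R^n$ be its restriction to the first $n$ coordinates, and observe that $v'\in\ext(E_1^{n})$; by the inductive hypothesis $v'=\sum_{j=1}^k(-1)^{j+1}v_{i_j}^{n}$ for some $(i_1,\dots,i_k)\in\mathcal{I}_n$.

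It then remains to exhibit $\tau\in\mathcal{I}_{n+1}$ with $\sum_j(-1)^{j+1}v_{\tau_j}^{n+1}=v$, which I would do by a case split according to $v(n+1)\in\{1,-1\}$ and whether $i_k=n$: take $\tau=(i_1,\dots,i_k)$ if $v(n+1)=1$; take $\tau=(i_1,\dots,i_k,n,n+1)$ if $v(n+1)=-1$ and $i_k<n$; take $\tau=(i_1,\dots,i_{k-1},n+1)$ if $v(n+1)=-1$ and $i_k=n$. In each case one checks that $\tau\in\mathcal{I}_{n+1}$ (using that $k$ is odd, and in the second case that $i_k<n$ forces $k\le n-1$, so $k+2\le n+1$); that restricting $\sum_j(-1)^{j+1}v_{\tau_j}^{n+1}$ to the first $n$ coordinates reproduces $v'$ (by the inductive hypothesis together with the restriction identities above — in the second case via the cancellation $-v_n^{n}+v_n^{n}$); and that its $(n+1)$-th coordinate equals $v(n+1)$ (a short parity computation from the three boundary values $v_i^{n+1}(n+1)=1$, $v_n^{n+1}(n+1)=1$, $v_{n+1}^{n+1}(n+1)=-1$ and the parity of $k$). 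This places $v$ in $\bigl\{\sum_{j}(-1)^{j+1}v_{i_j}^{n+1}:(i_1,\dots,i_k)\in\mathcal{I}_{n+1}\bigr\}$ and closes the induction.

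I expect the only genuine difficulty to be organizational rather than conceptual: guessing the correct $\tau$ in the case $v(n+1)=-1$, and then keeping the several sign and parity computations straight. Everything else is a direct unwinding of the definitions of $v_i^{n}$, $\mathcal{I}_n$ and $\ext(E_1^n)$. (If one also wants the map $(i_1,\dots,i_k)\mapsto\sum_j(-1)^{j+1}v_{i_j}^{n}$ to be a bijection onto $\ext(E_1^n)$, this follows for free since $|\mathcal{I}_n|=2^{n-1}=|\ext(E_1^n)|$, but the statement as given needs only the set equality.)
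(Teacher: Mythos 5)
Your proposal is correct and follows essentially the same route as the paper: a direct sign/parity computation (your interval index $p$ plays exactly the role of the paper's sets $A_{j_0}$, $B_{j_0}$) for the inclusion $\supseteq$, and induction on $n$ via restriction to the first $n$ coordinates for $\subseteq$, with the same three-case choice of tuple according to $v(n+1)$ and whether $i_k=n$. The only cosmetic difference is that you make the restriction identities between $\mathcal{B}_{n+1}$ and $\mathcal{B}_n$ explicit, which the paper leaves implicit.
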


\begin{proof}
We will begin by showing the inclusion
$$
\bigg\{ \sum_{j=1}^{k}(-1)^{j+1}v_{i_j}^{n}:   (i_1, \ldots ,i_k) \in \mathcal{I}_n\bigg \} \subset \ext(E_1^{n})
$$

Let be $(i_1, \ldots ,i_k)$ an element in $\mathcal{I}_n$. We  denote by $z=\sum_{j=1}^{k}(-1)^{j+1}v_{i_j}^{n}$ and for every $1\le j_0\le n$ consider the sets
given by
$$
A_{j_0}:=\{s\le k : v_{i_s}^n(j_0)=1\} \,\,\,  \text{and} \,\,\, B_{j_0}:=\{s\le k : v_{i_s}^n(j_0)=-1\}.
$$
Since 
$A_{j_0}$ and $B_{j_0}$ are disjoint subsets  whose union is the set $\{1,\ldots, k\}$,   we have that 
$$
z(j_0)= \sum_{j\in A_{j_0}}(-1)^{j+1}1 + \sum_{j\in B_{j_0}}(-1)^{j+1}(-1).
$$ 
By using also that $A_{j_0}$ and $B_{j_0}$ are  intervals of $\{s \in \N : s \le k\}$ and  $k$  is odd we obtain that  $z(j_0) \in \{1, -1\}$. Moreover, it is clear that  $z(1)=1$ since $v_i^n(1)=1 $ for  each $i \le n$  and $k$ is odd.  
Hence $z \in E_1 ^n  \cap   \ext( B _ {\ell _\infty ^n } ) $, so $z$ is an extreme point of $E_1^{n}$.

Our aim now is to prove the inclusion
\begin{equation}
\label{E1-E}
\ext(E_1^{n})\subset \bigg\{ \sum_{j=1}^{k}(-1)^{j+1}v_{i_j}^{n}:   (i_1, \ldots ,i_k) \in \mathcal{I}_n\bigg \}.
\end{equation}
We prove the previous assertion by  induction.  For $n=1$  condition \eqref{E1-E} is trivially satisfied.  Assume that \eqref{E1-E}  holds for a natural number $n$.  If  $v \in \ext(E_1^{n+1})$, we define  an element  $v' $ in $\R^n$ by
$$
v'(i)=v(i),  \sep \forall i\le n.
$$
Clearly $v' \in  \ext(E_1^{n})$. 
By assumption there exists $(i_1, \ldots ,i_k) \in \mathcal{I}_n$ such that 
$$
v'=\sum_{j=1}^{k}(-1)^{j+1}v_{i_j}^{n}.
$$
From this  we obtain 
$$
    v=
    \begin{cases}
    \sum_{j=1}^{k}(-1)^{j+1}v_{i_j}^{n+1}        & \mbox{if} \sep v(n+1)=1  \\
    \sum_{j=1}^{k-1}(-1)^{j+1}v_{i_j}^{n+1} + v_{n+1}^{n+1}  & \mbox{if} \sep  v(n+1)=-1 \sep \text{and} \sep i_k=n\\
    \sum_{j=1}^{k}(-1)^{j+1}v_{i_j}^{n+1} - v_n^{n+1} + v_{n+1}^{n+1} &\mbox{if} \sep  v(n+1)=-1 \sep \text{and} \sep i_k<n.
    \end{cases}
$$
The previous expression shows that  $v\in \biggl\{  \displaystyle{\sum_{p=1}^{m}}(-1)^{p+1}v_{j_p}^{n+1}:   (j_1, \ldots ,j_m) \in \mathcal{I}_{n+1}\biggr \}$.
\end{proof}

The following  result identifies the closed unit ball of the  space $L(\ell _\infty ^n, Y)$     with  $M_Y ^n$.
  It  extends \cite[Proposition 2.11]{ADS}.

\begin{proposition}
	\label{prop-ident}
	The mapping   $\Phi : L(\ell_\infty ^n, Y) \llll Y^n $ given by $\Phi (T)= ( T(v^n_i))_{i \le n} $ is a linear bijection. It is also satisfied that 
	$$
	\Vert T \Vert = \max \bigg \{ \biggl\Vert T \bigg(\sum_{j=1}^{k}(-1)^{j+1}v_{i_j}^n\bigg)\biggr\Vert: (i_1, \ldots ,i_k) \in \mathcal{I}_n\bigg\}.
	$$
In particular, $\Phi(B_{ L(\ell_\infty ^n, Y)})=M_Y ^n.$ 
\end{proposition}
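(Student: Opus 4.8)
The plan is to settle the three claims — that $\Phi$ is a linear bijection, the formula for $\Vert T\Vert$, and the identity $\Phi(B_{L(\ell_\infty^n,Y)})=M_Y^n$ — in that order, relying only on Lemmas~\ref{B-base} and~\ref{ExE1}.

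Linearity of $\Phi$ is immediate from the definition. For bijectivity I would invoke Lemma~\ref{B-base}: when $n\ge 2$ it asserts that $\mathcal{B}_n=\{v_i^n:1\le i\le n\}$ is a basis of $\R^n=\ell_\infty^n$, while the case $n=1$ is trivial since $v_1^1=1$. A linear operator on $\ell_\infty^n$ is thus completely determined by its values on $\mathcal{B}_n$, so $\Phi$ is injective; and given any $(y_i)_{i\le n}\in Y^n$, the linear map prescribed by $v_i^n\mapsto y_i$ is automatically bounded (the domain being finite-dimensional) and is sent by $\Phi$ to $(y_i)_{i\le n}$, so $\Phi$ is surjective.

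For the norm formula, note that $v\mapsto\Vert T(v)\Vert$ is a convex function on $B_{\ell_\infty^n}=[-1,1]^n$, which is the convex hull of its finitely many extreme points $\{-1,1\}^n$; hence $\Vert T\Vert=\max\{\Vert T(v)\Vert:v\in\ext(B_{\ell_\infty^n})\}$. Since $\Vert T(-v)\Vert=\Vert T(v)\Vert$ and, by the description of $\ext(E_1^n)$ recorded before Lemma~\ref{ExE1}, one has $\ext(E_1^n)=\{v\in\ext(B_{\ell_\infty^n}):v(1)=1\}$, pairing each extreme point with its opposite shows this maximum equals $\max\{\Vert T(v)\Vert:v\in\ext(E_1^n)\}$. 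Now Lemma~\ref{ExE1} rewrites $\ext(E_1^n)$ as $\{\sum_{j=1}^k(-1)^{j+1}v_{i_j}^n:(i_1,\ldots,i_k)\in\mathcal{I}_n\}$, and substituting yields the asserted expression for $\Vert T\Vert$ (a maximum over the finite set $\mathcal{I}_n$).

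Finally, writing $(y_i)_{i\le n}=\Phi(T)$ and using linearity of $T$, for every $(i_1,\ldots,i_k)\in\mathcal{I}_n$ we have
$$T\Big(\sum_{j=1}^k(-1)^{j+1}v_{i_j}^n\Big)=\sum_{j=1}^k(-1)^{j+1}y_{i_j}.$$
So the norm formula gives $\Vert T\Vert\le 1$ if and only if $\sum_{j=1}^k(-1)^{j+1}y_{i_j}\in B_Y$ for all $(i_1,\ldots,i_k)\in\mathcal{I}_n$, i.e.\ if and only if $\Phi(T)\in M_Y^n$; since $\Phi$ is onto, this yields $\Phi(B_{L(\ell_\infty^n,Y)})=M_Y^n$. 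I do not expect a genuine obstacle here: the only steps demanding care are the passage from the supremum over the whole ball to a maximum over $\ext(E_1^n)$ via the symmetry $v\leftrightarrow-v$, the correct indexing when quoting Lemma~\ref{ExE1}, and treating the degenerate case $n=1$ separately.
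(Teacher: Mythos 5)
Your proposal is correct and follows essentially the same route as the paper: Lemma~\ref{B-base} gives the bijection, the reduction to extreme points of $B_{\ell_\infty^n}$ together with the symmetry $v\leftrightarrow -v$ and Lemma~\ref{ExE1} gives the norm formula, and the identification of the unit ball with $M_Y^n$ follows immediately. The extra details you supply (the convexity argument for attaining the maximum at extreme points, the explicit surjectivity argument, and the $n=1$ case) are harmless elaborations of steps the paper treats as clear.
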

\begin{proof}
	By Lemma \ref{B-base}  the set $\mathcal{B}_n=\{ v_i ^{n}: i \le n\}$ is a basis of $\R^n$, so  every operator $T\in L(\ell_\infty ^n, Y)$  is determined by the element $( T(v_i^{n}))_{ i \le n}\in Y^n$.  From this it is immediate that $\Phi$ is a linear bijection. 
	
It is clear that  
$$
\ext \bigl( B_{\ell_\infty ^n}\bigr)  = \{ v \in B_{\ell_\infty ^n}: |v(i)|=1,  \sep \forall 1 \le i\le n\}=\ext \bigl(E_1^{n} \bigr)  \cup \ext \bigl(-E_1^{n}\bigr).
$$  
Then by Lemma \ref{ExE1} we have the following
\begin{eqnarray*}
\Vert T \Vert&=& \max \{ \Vert T(e) \Vert : e \in \ext \bigl( B_{\ell_\infty ^n }\bigr) \} \\
&=& 	\max \{ \Vert T(e) \Vert : e \in \ext (E_1^{n}) \} \\
&=& \max \bigg \{ \biggl\Vert T \bigg(\sum_{j=1}^{k}(-1)^{j+1}v_{i_j}^n\bigg)\biggr\Vert: (i_1, \ldots ,i_k) \in \mathcal{I}_n\bigg\}.
\end{eqnarray*}
From the previous expression it follows that $T \in B_{L(\ell_\infty ^n, Y)}$ if and only if $\Phi(T) $ is an element in $M_Y ^n$.
\end{proof}

\begin{definition}
	\label{def-AHSPLn}
	Let $n$  be a positive integer. A Banach space $Y$  has the  {\it  approximate  hyperplane  sum property for $\ell_\infty^n$} (AHSp-$\ell_\infty^n$)  if for every $0 < \varepsilon < 1$ there is $ 0< \gamma_n (\varepsilon) < \varepsilon $ satisfying the following condition
	
	For every $ (y_i)_{i \le n}  \in M_Y^n$, if there exist a nonempty subset  $A$ of $\{1,\ldots, n\}$ and $y^* \in
	S_{Y^*}  $ such that $y^* (y_i) > 1 - \gamma_n (\varepsilon) $ for each $i \in A$, then there exists an element $ (z_i)_{i \le n}  \in M_Y^n$  satisfying $\Vert z_i - y_i \Vert < \varepsilon $ for every $i \le n$ and $\Vert \sum_{i \in A } z_i \Vert = \vert A\vert $.
\end{definition}

\begin{notation}
		\label{not-tau}
	By $\tau_n$ we  denote the  mapping on $Y^n$ given by
$$
	    \tau_n (y_1,\ldots, y_n):=(y_2, \ldots,y_n, -y_1).
$$
\end{notation}

It is clear that $\tau_n$  is a bijective  mapping from $Y^n$ onto itself. 
\noindent 
For  a fixed element  $(y_1, \ldots, y_n ) \in Y^n$ we denote  by  $(y'_1, \ldots, y'_n) = (y_2, \ldots,y_n, -y_1)$.
If $(i_1, \ldots ,i_k) \in \mathcal{I}_n$ we have the following   identities
$$\sum_{j=1}^{k}(-1)^{j+1}y'_{i_j}=\sum_{j=1}^{k}(-1)^{j+1}y_{i_j+1} \sep \text{if} \sep  i_k < n
$$
$$
\sum_{j=1}^{k}(-1)^{j+1}y'_{i_j}=-(y_1 + \sum_{j=1}^{k-1}(-1)^{j}y_{i_j+1})  \sep \text{if} \sep  i_k = n.
$$
From the previous equalities it follows that $\tau_n  (M_Y^n) \subset M_Y^n $. By  using also that   $\tau_n^{2n}$ is the  identity mapping  on $Y^n$, we obtain the following result.

\begin{lemma}
	\label{le-y-y1}
Let $(y_i) _{i \le n} \in Y^n$ and $m\in \N$. The following assertions are equivalent
	\begin{enumerate}
		\item[1)]  $(y_i)_{i \le n}  \in M_Y^n .$
		\item[2)]  $\tau_{n}^m((y_i)_{i \le n})  \in M_Y^n.$
	\end{enumerate}
\end{lemma}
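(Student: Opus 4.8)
The plan is to leverage the two facts recorded just before the statement: that $\tau_n(M_Y^n)\subset M_Y^n$, and that $\tau_n^{2n}$ is the identity mapping on $Y^n$.

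First I would establish the implication $1)\Rightarrow 2)$ in the stronger form that $(y_i)_{i\le n}\in M_Y^n$ implies $\tau_n^m((y_i)_{i\le n})\in M_Y^n$ for \emph{every} nonnegative integer $m$. This goes by induction on $m$: the case $m=0$ is trivial, and if $\tau_n^m((y_i)_{i\le n})\in M_Y^n$, then applying the inclusion $\tau_n(M_Y^n)\subset M_Y^n$ to this element gives $\tau_n^{m+1}((y_i)_{i\le n})=\tau_n\big(\tau_n^m((y_i)_{i\le n})\big)\in M_Y^n$. In particular this yields the implication $1)\Rightarrow 2)$ for the $m$ in the statement.

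For $2)\Rightarrow 1)$, assume $\tau_n^m((y_i)_{i\le n})\in M_Y^n$ and set $\ell:=m(2n-1)$, which is a nonnegative integer with $m+\ell=2nm$ a multiple of $2n$. Applying the implication just proved (with exponent $\ell$) to the element $\tau_n^m((y_i)_{i\le n})\in M_Y^n$, we obtain $\tau_n^{\ell}\big(\tau_n^m((y_i)_{i\le n})\big)\in M_Y^n$. But $\tau_n^{\ell}\circ\tau_n^m=\tau_n^{m+\ell}$ is a power of $\tau_n^{2n}$, hence equals the identity on $Y^n$, so this element is precisely $(y_i)_{i\le n}$. Therefore $(y_i)_{i\le n}\in M_Y^n$, completing the equivalence.

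I do not expect any genuine obstacle here; the single point worth a moment's care is to choose the exponent $\ell$ nonnegative, so that one only ever invokes the forward inclusion $\tau_n(M_Y^n)\subset M_Y^n$ along iterates of $\tau_n$ and never needs to argue separately that $\tau_n^{-1}$ preserves $M_Y^n$ (though this too would follow, since $\tau_n^{-1}=\tau_n^{2n-1}$).
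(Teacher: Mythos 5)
Your argument is correct and is exactly the route the paper intends: it derives the lemma from the two facts stated just before it, namely the inclusion $\tau_n(M_Y^n)\subset M_Y^n$ and the identity $\tau_n^{2n}=\mathrm{id}$, with the reverse implication obtained by composing with a suitable nonnegative power to reach a multiple of $2n$. The paper leaves these details implicit, so your write-up simply fills in the same argument.
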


In order to provide   a characterization of the AHSp-$\ell_\infty^n$  that will be essential  in the rest of the paper, we recall the following notion. A subset $B\subset B_{Y^{*}}$ is $1$-\textit{norming} if
$$
\|y\|=\sup \{\vert y^{*}(y) \vert : y^{*}\in B\},  \sep  \forall   y\in Y.
$$

\begin{proposition}
    \label{pro-char}
    Let $Y$ be a Banach space, $n$ a positive integer and $B \subset S_{Y^*}$  a  $1$-norming subset. The following assertions  are equivalent.
    \begin{enumerate}
       
       \item[1)]  $Y$  has the AHSp-$\ell_\infty^n$.
       
       \item[2)] The condition stated in Definition \ref{def-AHSPLn} is satisfied for each $y^*\in B$.      
        \item[3)]  For every $0 < \varepsilon < 1$ there exists $ 0< \rho_n (\varepsilon) < \varepsilon $ such that for every element $ (y_i)_{i \le n}  \in M_Y^n$,  if there exist $n_0\le n$ and $y^* \in B$ such that $y^* (y_i) > 1 - \rho_n (\varepsilon) $ for each $i \le n_0$, then there exists an element $ (z_i)_{i \le n}  \in M_Y^n$  satisfying 
        $$\Vert z_i - y_i \Vert < \varepsilon \sep \text{for each}  \sep i \le n \sep \text{and} \sep \Big\Vert \sum_{i=1}^{n_0} z_i \Big\Vert = n_0.$$
        
        \item[4)]
        For every $0< \varepsilon <1$ there exists  $0 < \nu_n( \varepsilon)<  \varepsilon $ such that for each element  $(y_i)_{i \le n} \in M_Y^n$  and each convex combination  $\sum_{i=1}^n \alpha_i y_i $ satisfying
        $$
        \Bigl \Vert \sum_{i=1}^n \alpha_i y_i\Bigr \Vert  > 1-\nu_n(\varepsilon ),
        $$
        there exist a set $C\subset \{1,\ldots, n\}$ and an element $(z_i)_{i \le n} \in M_Y ^n$  such that
        \begin{enumerate}
            \item[i)] $\sum_{i\in C}\alpha_i>1-\varepsilon,$
            \item[ii)]  $\|z_i-y_i\|<\varepsilon  \sep \text{for each} \sep i \le n$ \  and   
            \item [iii)] $\Vert \sum _{i \in C } z_i \Vert = \vert C \vert$.
        \end{enumerate}
    \end{enumerate}
    
Moreover, if $\gamma_n $ is  a function satisfying 1) (see Definition \ref{def-AHSPLn})  then condition 4) holds  for $\nu_n = \gamma_n ^2$. Condition 4) for  a function $\nu_n $  implies  1)  for  the function $\gamma_n$ given by    $\gamma_n(\varepsilon)=\nu_n(\frac{\varepsilon}{n}) $. 
In case that 3) holds for  $\rho_n$ then 1) is satisfied for  the function given by  $\gamma_n(\varepsilon)= \frac{1}{4}\rho^2_n \bigl( \frac{\varepsilon}{n} \bigr)$.                       
\end{proposition}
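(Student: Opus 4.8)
The plan is to prove the equivalences through the cycle $1)\Rightarrow 2)\Rightarrow 3)\Rightarrow 4)\Rightarrow 1)$, together with the direct implication $1)\Rightarrow 4)$ (which yields the sharper constant $\nu_n=\gamma_n^2$), keeping track at every step of how the control function is transformed. The first two implications are purely formal: $2)$ is condition $1)$ restricted to the functionals of $B$, and $3)$ is the special case of $2)$ in which the distinguished set is the initial segment $\{1,\ldots,n_0\}$; both hold with the same function, so one may take $\rho_n=\gamma_n$ there. Hence the real content is $1)\Rightarrow 4)$, $4)\Rightarrow 1)$ and $3)\Rightarrow 4)$.

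For $1)\Rightarrow 4)$ I would, given $(y_i)_{i\le n}\in M_Y^n$ and a convex combination with $\bigl\|\sum_i\alpha_iy_i\bigr\|>1-\gamma_n(\varepsilon)^2$, take $y^*\in S_{Y^*}$ norming $\sum_i\alpha_iy_i$; as $y^*(y_i)\le\|y_i\|\le 1$ this gives $\sum_i\alpha_i\bigl(1-y^*(y_i)\bigr)<\gamma_n(\varepsilon)^2$, whence the set $C:=\{i\le n:\ y^*(y_i)>1-\gamma_n(\varepsilon)\}$ satisfies $\gamma_n(\varepsilon)\sum_{i\notin C}\alpha_i<\gamma_n(\varepsilon)^2$, i.e.\ $\sum_{i\in C}\alpha_i>1-\gamma_n(\varepsilon)>1-\varepsilon$ and $C\neq\emptyset$; applying $1)$ with $A=C$ finishes it, with $\nu_n=\gamma_n^2$. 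For $4)\Rightarrow 1)$, given $(y_i)\in M_Y^n$, a nonempty $A$ and $y^*\in S_{Y^*}$ with $y^*(y_i)>1-\nu_n(\varepsilon/n)$ for $i\in A$, I would apply $4)$ at parameter $\varepsilon/n$ to the convex combination $\frac1{|A|}\sum_{i\in A}y_i$, whose norm is at least $\frac1{|A|}\sum_{i\in A}y^*(y_i)>1-\nu_n(\varepsilon/n)$: this produces $C$ and $(z_i)\in M_Y^n$ with $\|z_i-y_i\|<\varepsilon/n$, $\bigl\|\sum_{i\in C}z_i\bigr\|=|C|$ and $|C\cap A|/|A|>1-\varepsilon/n$; since $|A|\le n$ this forces $|C\cap A|>|A|-1$, i.e.\ $A\subset C$, and if $w^*\in S_{Y^*}$ norms $\sum_{i\in C}z_i$ then $w^*(z_i)=1$ for $i\in C\supset A$, hence $\bigl\|\sum_{i\in A}z_i\bigr\|=|A|$; thus $1)$ holds with $\gamma_n(\varepsilon)=\nu_n(\varepsilon/n)$.

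The heart of the proof is $3)\Rightarrow 4)$, and the only genuine obstacle is that $3)$ delivers the conclusion only when the good indices form an initial segment, whereas in $4)$ the output set $C$ is ours to choose. To bridge this I would first record an ``interval--closure'' fact, the single place where Proposition \ref{prop-ident} is used: if $w^*\in S_{Y^*}$, $(y_i)\in M_Y^n$, $\delta>0$ and $G:=\{i\le n:\ w^*(y_i)>1-\delta\}\neq\emptyset$, then $w^*(y_i)>1-4\delta$ for all $i$ with $\min G\le i\le\max G$. Indeed, write $(y_i)=\Phi(T)$ with $\|T\|\le 1$; then $\phi:=w^*\circ T$ has norm $\le 1$ on $\ell_\infty^n$, i.e.\ $\phi=(\phi_1,\ldots,\phi_n)$ with $\sum_{j=1}^n|\phi_j|\le 1$, and $\phi(v_b^n)=w^*(y_b)$, while the shape of $v_b^n$ gives, for each $b\le n$,
\[ 1-\phi(v_b^n)=\Bigl(1-\sum_{j=1}^n|\phi_j|\Bigr)+\bigl(|\phi_1|-\phi_1\bigr)+2\sum_{j=2}^{b}\max\{\phi_j,0\}+2\sum_{j=b+1}^{n}\max\{-\phi_j,0\}, \]
a sum of four nonnegative terms; evaluating at $b=\min G$ and at $b=\max G$ (where the left side is $<\delta$) bounds each term by $\delta$, and then monotonicity of the partial sums gives $1-\phi(v_b^n)<4\delta$ for $\min G\le b\le\max G$.

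Granting this, $3)\Rightarrow 4)$ goes as follows, with $\rho:=\rho_n(\varepsilon)$ and $\nu_n(\varepsilon):=\tfrac14\rho^2$. Since $B$ is $1$-norming (and, replacing $(y_i)$ by $(-y_i)$ and $(z_i)$ by $(-z_i)$ at the end if needed), choose $z^*\in B$ with $z^*\bigl(\sum_i\alpha_iy_i\bigr)>1-\nu_n(\varepsilon)$, so $\sum_i\alpha_i\bigl(1-z^*(y_i)\bigr)<\tfrac14\rho^2$ (the summands are $\ge0$ since $z^*(y_i)\le1$). Put $G:=\{i:\ z^*(y_i)>1-\rho/4\}$; then $\tfrac{\rho}{4}\sum_{i\notin G}\alpha_i<\tfrac14\rho^2$, so $\sum_{i\in G}\alpha_i>1-\rho>1-\varepsilon$ and $G\neq\emptyset$. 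By the interval--closure fact with $\delta=\rho/4$, $z^*(y_i)>1-\rho$ for every $i\in C:=\{\min G,\ldots,\max G\}$, and $\sum_{i\in C}\alpha_i\ge\sum_{i\in G}\alpha_i>1-\varepsilon$, which is (i). Setting $n_0:=|C|=\max G-\min G+1$ and $(y'_p):=\tau_n^{\min G-1}\bigl((y_i)\bigr)\in M_Y^n$, the hypothesis $\max G\le n$ guarantees $y'_p=y_{p+\min G-1}$ with no sign change for $p\le n_0$, so $z^*(y'_p)>1-\rho_n(\varepsilon)$ for $p\le n_0$; applying $3)$ to $(y'_p)$, $n_0$ and $z^*\in B$ yields $(w_p)\in M_Y^n$ with $\|w_p-y'_p\|<\varepsilon$ and $\bigl\|\sum_{p=1}^{n_0}w_p\bigr\|=n_0$. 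Finally put $(z_i):=\tau_n^{-(\min G-1)}\bigl((w_p)\bigr)\in M_Y^n$; since each $\tau_n^k$ is a signed coordinate permutation, $\|z_i-y_i\|<\varepsilon$ for all $i$, while $z_i=w_{i-\min G+1}$ for $i\in C$, so if $w^*\in S_{Y^*}$ norms $\sum_{p=1}^{n_0}w_p$ then $w^*(z_i)=1$ for $i\in C$ and $\bigl\|\sum_{i\in C}z_i\bigr\|=|C|$. This gives $4)$ with $\nu_n=\tfrac14\rho_n^2$; composing with $4)\Rightarrow 1)$ gives $3)\Rightarrow 1)$ with $\gamma_n(\varepsilon)=\tfrac14\rho_n^2(\varepsilon/n)$, and with $1)\Rightarrow 2)\Rightarrow 3)$ all four statements are equivalent with the stated constants. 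The steps I expect to be delicate are the coordinate identity for $\phi(v_b^n)$ above and the bookkeeping showing that $\tau_n^{\min G-1}$ carries $\{\min G,\ldots,\max G\}$ onto $\{1,\ldots,n_0\}$ with no sign changes — precisely where $\max G\le n$ is needed.
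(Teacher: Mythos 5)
Your proof is correct, and the global architecture (rotate with $\tau_n$ so the ``good'' indices form an initial segment, apply 3) there, pull back, and use a Hahn--Banach norming functional to restrict the equality $\Vert\sum z_i\Vert=|C|$ to a subset) is the same as the paper's; the implications $1)\Rightarrow 2)\Rightarrow 3)$, $1)\Rightarrow 4)$ with $\nu_n=\gamma_n^2$, and $4)\Rightarrow 1)$ with $\gamma_n(\varepsilon)=\nu_n(\varepsilon/n)$ coincide with the paper's essentially verbatim. The one place where you genuinely diverge is the interval--closure step and the way you close the cycle. The paper proves $3)\Rightarrow 2)$ and then $2)\Rightarrow 4)$, and its interval--closure argument is a one-line consequence of the definition of $M_Y^n$: for $i<j<k$ with $i,k$ in the good set, $(i,j,k)\in\mathcal{I}_n$ gives $y^*(y_j)\ge y^*(y_i)+y^*(y_k)-1>1-2\gamma'$, so the good set at level $\gamma'$ fills the interval at level $2\gamma'$; after rotating $\min A$ to position $1$ this yields the initial segment $\{1,\dots,\max A\}$. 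You instead go directly $3)\Rightarrow 4)$ and prove interval closure by passing through $\Phi^{-1}$ to an operator $T$, writing $w^*\circ T\in(\ell_\infty^n)^*=\ell_1^n$ and decomposing $1-\phi(v_b^n)$ into four nonnegative monotone terms; I checked the identity and the monotonicity, and they are correct, yielding the factor $4\delta$ in place of the paper's $2\gamma'$. Your method is heavier but has the mild virtue of being ``local'' (it fills $[\min G,\max G]$ without first normalizing $\min G$ to $1$, though you still rotate afterwards to apply 3)); the paper's is shorter and loses only a factor $2$. Remarkably the bookkeeping conspires so that both routes land on exactly the stated moduli, including $\gamma_n(\varepsilon)=\frac14\rho_n^2(\varepsilon/n)$ for $3)\Rightarrow 1)$, so nothing is lost quantitatively either way.
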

\begin{proof}
    Clearly 1) implies 2) and 2) implies 3).\\
\noindent
3) $\Rightarrow$ 2)\\
Let $0<\varepsilon<1$ and $\rho_n(\varepsilon)$  be the positive real number satisfying  condition 3). We put $\gamma ^\prime _n(\varepsilon)=\frac{\rho_n(\varepsilon)}{2}$. Let $ (y_i)_{i \le n}  \in M_Y^n$ and assume that there exists a nonempty subset $A$ of $\{1,\ldots, n\}$ and $y^* \in B $ such that 
$$
y^* (y_i) > 1 - \gamma ^\prime _n (\varepsilon),  \sep \forall i \in A.
$$
By Lemma \ref{le-y-y1}  we  may   assume without loss of generality that $1\in A$. On the other hand, if $i,k \in A$ and $j$ is an integer such that $i<j<k$, then
$$
2-2\gamma ^\prime _n(\varepsilon)-y^{*}(y_j) < y^{*}(y_i-y_j+y_k)\le \|y_i-y_j+y_k\| \le 1.
$$
As a consequence,  $y^{*}(y_j)> 1- \rho_n(\varepsilon)$. Hence we obtain that
$$
y^* (y_i) > 1 - \rho_n (\varepsilon) , \sep \forall i \le \max A.
$$
By assumption there exists $ (z_i)_{i \le n}  \in M_Y^n$  satisfying  
\begin{enumerate}
        \item[i)] $\Vert z_i - y_i \Vert < \varepsilon$ for each $i \le n$ and 
        \item[ii)]  $\Vert \sum_{i=1}^{\max A} z_i \Vert = \max A.$
          \end{enumerate}
          Lastly, by Hahn-Banach Theorem  and the fact that $\{ z_i : i \le n \} \subset B_Y$,    condition ii) implies  that $\Vert \sum_{i\in A} z_i \Vert = \vert A\vert.$      

\noindent
2) $\Rightarrow$ 4)
\newline
    Assume that $Y$ satisfies condition 2).  For each $0<\varepsilon < 1$  let $ \gamma ^\prime _n(\varepsilon)<\varepsilon$  be the positive real number satisfying Definition \ref{def-AHSPLn} for every element $y^{*}\in B$. We take $\nu_n(\varepsilon)=\gamma ^\prime _n(\varepsilon)^2$.

    Let   $ (y_i)_{i \le n} \in M_Y^n$ and assume that the convex combination $\sum_{i=1}^{n} \alpha_i y_i$   satisfies  
     $\bigl\Vert  \sum_{i=1}^{n} \alpha_iy_i \bigr \Vert  > 1 - \nu_n(\varepsilon) $. Since  $B$ is a $1$-norming set and  $(-y_i)_{i \le n} \in M_Y^n$, by using $(-y_i)_{i \le n}$ instead of $(y_i) _{i \le n}$, if needed,     there is  $y^{*}\in B$ such that
    $$
   y^{*}\biggl( \sum_{i=1}^{n} \alpha_i y_i \biggr) =   \Bigl \Vert   \sum_{i=1}^{n} \alpha_iy^{*}(y_i)  \Bigr \Vert >  1 - \nu_n(\varepsilon) = 1 - \gamma ^\prime _n(\varepsilon) ^2 .
    $$
    By \cite[Lema 3.3]{AAGM1} the set $C:=\{ i\le n : y^*(y_i)> 1-\gamma ^\prime _n(\varepsilon) \}$
    satisfies
    $$
    \sum_{i\in C}\alpha_i \geq 1 - \frac{\nu_n(\varepsilon)}{\gamma ^\prime _n(\varepsilon)}> 1- \varepsilon.
    $$
    By assumption there is an element $(z_i )_{i \le n}  \in M_Y^n$  such that  $\| z_i - y_i\| < \varepsilon$ for each  $i \leq n$ and 
     \linebreak[4]
    $\|\sum_{i \in C} z_i \| = |C|$.

    \noindent
    4) $\Rightarrow$ 1)
    \newline
Now we assume that $Y$ satisfies condition  4).
     Given  $0<\varepsilon < 1$, let  $\nu_n(\varepsilon)$ be the positive real number satisfying the assumption. We will show that  $\gamma_n(\varepsilon)=\nu_n(\frac{\varepsilon}{n})$ satisfies Definition \ref{def-AHSPLn}.

    Let $(y_i)_{i \le n} \in M_Y^n$  and assume that for some nonempty set  $A \subset \{1,\ldots, n\}$ and $y^{*} \in S_{Y^{*}}$ it is satisfied that $y^{*}(y_i)>1-\gamma_n(\varepsilon)$ for each  $i\in A$. We define the following nonnegative real numbers
    $$
    \alpha_i =
    \begin{cases}
    \frac{1}{|A|}            & \mbox{if} \sep i  \in A   \\
    0  & \mbox{if} \sep  i \in \{1,\ldots, n\} \backslash  A.
    \end{cases}
    $$
    Clearly $\sum _{i=1}^ n \alpha _i =1$ and we also have that
    $$
    \Big\| \sum_{i =1}^{n}\alpha_iy_i \Big\| = \frac{\Big\| \sum_{i' \in A}y_i \Big\| }{|A|}\geq \frac{y^{*}\Big( \sum_{i' \in A}y_i \Big )}{|A|}> 1-\nu_n\Big(\frac{\varepsilon}{n}\Big).
    $$

    By assumption there is a set $C\subset \{ 1,\ldots,n\}$ and $(z_i)_{i\le n} \in  M_Y^n$  such that
    \begin{enumerate}
        \item[i)] $\sum_{i\in C}\alpha_i>1-\frac{\varepsilon}{n},$
        \item[ii)]  $\|z_i-y_i\|<\frac{\varepsilon}{n}  \sep \text{for each}\sep  i \le n$ and 
        \item [iii)]  $ \bigl \Vert \sum _{ i \in C} z_i \bigr\Vert  = \vert C \vert $.
    \end{enumerate}
In case that $A \subset C$, condition iii) and Hahn-Banach Theorem implies that 
$ \bigl \Vert \sum _{ i \in A} z_i \bigr\Vert  = \vert A \vert $. So it suffices to prove $A \subset C$.
If  there  were  some $i_0 \in A\setminus C$,  we put  $B= \{i \in \{1,\ldots, n\}: i \ne i_0\}$  and so by using i) we have that 
    $$
    1- \frac{1}{\vert A \vert} =\sum_{ i \in B} \alpha_i\geq \sum_{i\in C} \alpha_i >1-\frac{\varepsilon}{n} >1- \frac{1}{n}.
    $$
    Then  $\vert A \vert >  n$, which is a contradiction. Hence, $A\subset C$ and we proved that $Y$ has the AHSp-$\ell_\infty^n$.
    
    As a consequence of the proofs of   3) $\Rightarrow$ 2),   2) $\Rightarrow$ 4) and  4) $\Rightarrow$ 1),  we deduce that a space $Y$ satisfying 3) for the function   $\rho_n$ also has the AHSp-$\ell_\infty ^n$ for the function  given by  $\gamma_n(\varepsilon)= \frac{1}{4}\rho^2_n \bigl( \frac{\varepsilon}{n} \bigr)$. 	
\end{proof}

Notice that Proposition  \ref{pro-char} makes  easier to show that a space  has the AHSp-$\ell_\infty^n$. To this purpose condition 3) is useful since it suffices to check Definition  \ref{def-AHSPLn} only for functionals in a $1$-norming set and  for simpler sets $A$.

\begin{remark}
	\label{re-n+1-n}
 AHSp-$\ell_\infty^{n+1}$ implies   AHSp-$\ell_\infty^{n}$ for each positive integer $n$. 
\end{remark}
This assertion can be easily checked  by using Definition  \ref{def-AHSPLn}.  It  follows from the following fact
$$
(y_i) _{ i \le n }  \in M_{ Y}^n \sep \Rightarrow \sep  (z_i) _{ i \le n+1 }  \in M_{ Y}^{n+1},
$$ 
where $z_i= y_i$ for $i \le n$ and $z_{n+1} = y_n$.

Notice also that Definition \ref{def-AHSPLn} is trivially satisfied for $n=1$. For each  $n \ge 2$ this is not the case. Indeed since $\ell_\infty ^2 $ and $\ell_1 ^2$ are isometric, in case that $Y$ is strictly convex and $n \ge 2$, by using  Remark \ref{re-n+1-n}, the proof of   \cite[Theorem 4.1]{AAGM1} and \cite[Lemma 3.2]{ACKLM}, if  $Y$ has the   AHSp-$\ell_\infty^{n}$   then $Y$ is uniformly convex.    By  the characterization that we will prove later  (Theorem \ref{teo-char})  and  \cite[Theorem 2.5]{KiI} the  converse result also holds since uniformly convex spaces $Y$ satisfy that the pair $ (\ell_\infty ^n , Y )$  has the Bishop-Phelps-Bollob\'{a}s property for operators for each $n\in \N$.

\section{ A characterization of the spaces $Y$  such that the pair $( \ell_\infty^n, Y)$ has the
Bishop-Phelps-Bollob{\'a}s property for operators}

The  main result of this section states that  Banach spaces $Y$ satisfying that the pair $(\ell _\infty ^n, Y)$ has the BPBp for operators  are  those having  the AHSp-$\ell_ \infty ^n$.  In order to prove such characterization it is useful to isolate  two techical results.
 Roughly speaking, the first  lemma states that an operator in $L(\ell_\infty ^n, Y)$ attaining its norm at a point $y$  of $S_{\ell_\infty ^n}$  that is close to an element, say $x$, belonging to one of the maximal faces of $B_{\ell_\infty ^n}$,  in fact also attains its norm at another  element of the same maximal face which is also  close to $x$.    In fact next assertion is more precise.

\begin{lemma}
\label{le-close-face}
Assume that $(\varepsilon, n)\in ]0,1[\times \N$, $S\in S_{ L(\ell_\infty ^n, Y)}$ and $(x,y) \in \co (\mathcal{B}_n) \times S_{\ell_\infty ^n}$ satisfies that 
$$
\Vert S(y)\Vert=1 \sep \text{and} \sep \Vert y-x\Vert < \varepsilon.
$$
Then there exists $z\in  \co (\mathcal{B}_n)$ such that
$$
\Vert S(z)\Vert=1 \sep \text{and} \sep \Vert z-x\Vert < \varepsilon.
$$
\end{lemma}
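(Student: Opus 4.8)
The plan is to turn the norm‑attainment of $S$ at $y$ into a set of linear constraints on finitely many coordinates, and then to move $x$ to a point $z\in\co(\mathcal B_n)$ that satisfies exactly those constraints while changing $x$ by less than $\varepsilon$.

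First I would use that $\Vert S(y)\Vert=1=\Vert S\Vert$: pick $y^*\in S_{Y^*}$ with $y^*(S(y))=1$ and write $y^*\circ S=\sum_{i=1}^n a_i e_i^*\in(\ell_\infty^n)^*$. From $1=(y^*\circ S)(y)\le\sum_i\abs{a_i}\le\Vert S\Vert\le 1$ one gets $\sum_i\abs{a_i}=1$ and $a_iy_i=\abs{a_i}$ for all $i$; hence, setting $I:=\{i:a_i\neq0\}$ and $s_i:=\sign a_i$, we have $y_i=s_i\in\{-1,1\}$ for $i\in I$, and for \emph{every} $w\in B_{\ell_\infty^n}$ with $w_i=s_i$ $(i\in I)$ one has $(y^*\circ S)(w)=\sum_{i\in I}\abs{a_i}=1$, so (using $\Vert S\Vert=1$) $\Vert S(w)\Vert=1$. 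Thus it suffices to find $z\in\co(\mathcal B_n)$ with $z_i=s_i$ for every $i\in I$ and $\Vert z-x\Vert<\varepsilon$; recall $\co(\mathcal B_n)=O_n$ (Lemma \ref{coB=O}), i.e. $z(1)=\Vert z\Vert=1$ and $z(2)\le\cdots\le z(n)$.

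Next I would observe that the prescribed signs are ``ordered'': for $2\le i<j$ with $i,j\in I$ one cannot have $y_i=1$ and $y_j=-1$, because $\Vert y-x\Vert<\varepsilon$ would force $x_i>1-\varepsilon$ and $x_j<-1+\varepsilon$, contradicting $x_i\le x_j$ (valid since $x\in O_n$) as $\varepsilon<1$; and if $1\in I$ then likewise $s_1=y_1=1$ since $x_1=1$. Consequently, with $p:=\max\bigl(\{1\}\cup\{i\in I:2\le i\le n,\ y_i=-1\}\bigr)$ and $q:=\min\bigl(\{n+1\}\cup\{i\in I:2\le i\le n,\ y_i=1\}\bigr)$, one checks $p<q$, that $I\cap\{2,\dots,n\}\subset\{2,\dots,p\}\cup\{q,\dots,n\}$, and that $y\equiv-1$ on the first block and $y\equiv1$ on the second. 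I expect this ``no crossing'' step to be the only genuine obstacle; it is exactly where the hypotheses $x\in\co(\mathcal B_n)$ and $\varepsilon<1$ are used, and once it is in place the rest is a routine verification.

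Finally I would define $z$ by $z_j:=1$ for $j=1$ or $j\ge q$, by $z_j:=-1$ for $2\le j\le p$, and by $z_j:=x_j$ for $p<j<q$. Then $z\in O_n=\co(\mathcal B_n)$: indeed $z_1=1$, $\Vert z\Vert_\infty=1$, and $z_2\le\cdots\le z_n$, the middle block being monotone because $x\in O_n$ while $-1\le x_j\le 1$ glues the three blocks together. By the preceding paragraph $z_i=s_i$ for every $i\in I$. And $\Vert z-x\Vert<\varepsilon$, since on $\{2,\dots,p\}$ one has $-1\le x_j\le x_p<-1+\varepsilon$, on $\{q,\dots,n\}$ one has $1\ge x_j\ge x_q>1-\varepsilon$, and $z_j=x_j$ for $j=1$ and for $p<j<q$. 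By the first step $\Vert S(z)\Vert=1$, so $z$ is the desired element and the proof is complete.
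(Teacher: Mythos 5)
Your proof is correct, and the element $z$ you construct is essentially the one in the paper's proof: $-1$ on an initial block $\{2,\dots,p\}$, the coordinates of $x$ in the middle, and $1$ on a final block, with the ``no crossing'' observation (coming from $x\in O_n=\co(\mathcal{B}_n)$ and $\varepsilon<1$) guaranteeing that the blocks are compatible and that $\Vert z-x\Vert<\varepsilon$. The one genuine difference is how you justify that $S$ still attains its norm at $z$: the paper proves a convexity claim (if $|y(i_0)|<1$, that coordinate may be replaced by any $t\in[-1,1]$ without losing norm attainment) and applies it coordinate by coordinate, whereas you pass to a supporting functional $y^*$ and read off from $y^*\circ S=\sum_i a_ie_i^*$ that only the coordinates in $I=\{i:a_i\neq 0\}$ matter and that $y$ equals $\pm 1$ there. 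Your dual argument is slightly sharper (it freezes possibly fewer coordinates, namely those in $I$ rather than all $i$ with $|y(i)|=1$) at the cost of invoking Hahn--Banach, while the paper's claim is more elementary and purely convex-geometric; both yield the same final verification. One small wording issue: it is not true in general that $y\equiv-1$ on all of $\{2,\dots,p\}$ (only on $I\cap\{2,\dots,p\}$), but your distance estimate only uses $y_p=-1$ and $y_q=1$, which do hold whenever the corresponding block is nonempty, so nothing breaks.
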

\begin{proof}
We begin by checking 
the following claim.

 \underline{Claim}: If $i_0 \le n$, $|y(i_0)| < 1$ and $t\in [-1,1]$ then the element   $y'\in S_{\ell_\infty ^n}$ defined by
  $$
    y'(i) =
    \begin{cases}
    y(i)            & \mbox{if} \sep i \neq i_0   \\
    t  & \mbox{if} \sep  i=i_0
    \end{cases}
  $$
  satisfies that $\Vert S(y')\Vert=1$.


It can be assumed  without loss of generality that $y(i_0)\in [t,1[$ (in  case that  $y(i_0)\in ]-1,t]$ we proceed analogously). 

 Let  define   the element   $ y''$ in $ \ell_\infty ^n  $
 	by
  $$
    y''(i) =
    \begin{cases}
    y(i)            & \mbox{if} \sep i \neq i_0   \\
    1  & \mbox{if} \sep  i=i_0,
    \end{cases}
  $$
  that clearly belongs to $S_{\ell_\infty ^n}$. 
There is $\alpha \in]0,1]$ such that
$
y(i_0)=\alpha t+(1-\alpha )1$ and so we have that $y=\alpha y'+(1-\alpha) y''$. Since $S$ attains its norm at $y$ we have that $\Vert S(y')\Vert=1$. So we proved the claim.

Now we consider the following sets
$$
A=\{i\in  \{2,\ldots, n\}: y(i)=-1\} \sem \text{and} \sem B=\{i\in  \{2,\ldots, n\}: y(i)=1\}.
$$
Note that if $i\in A$ and $j\in B$ the assumption $\Vert y-x \Vert < \varepsilon < 1$ implies that  $x(i)<0<x(j)$. Then by Lemma \ref{coB=O} we have that $i<j$  since  $x\in O_n$. Let define the following numbers 
$$ M_{-1}= \max (A \cup \{1\}) \sep \text{and} \sep m_{1}= \min (B \cup \{n+1\}).
$$

We know that  $M_{-1}<m_1$. Finally  the element  $z\in S_{\ell_\infty ^n}$ given  by

  $$
    z(i) =
    \begin{cases}
    1		& \mbox{if} \sep i=1   \\
    -1		& \mbox{if} \sep 2\le i \le M_{-1}   \\
    x(i)	& \mbox{if} \sep M_{-1}<i<m_{1}   \\
    1 		& \mbox{if} \sep m_{1}\le i \le n.
    \end{cases}
  $$
It is clear that  $z\in O_n$. Also,
$$
 \vert z(i) - y(i) \vert = |1-y(1)|=|x(1)-y(1)|<\varepsilon < 1,
$$
which implies that $y(1)>-1$.  By using the claim we get that $\Vert S(z) \Vert=1$.\\
On the other hand,
\begin{itemize}
\item If $2\le i \le M_{-1}$ then
\begin{eqnarray*}
|z(i)-x(i)|&=& |-1-x(i)|=1+x(i)\\
&\le& 1 + x(M_{-1}) \\
&=& |-1 - x(M_{-1})|=|y(M_{-1}) - x(M_{-1})| \\
&<& \varepsilon
\end{eqnarray*}
\item If $m_1\le i \le n$ then
\begin{eqnarray*}
|z(i)-x(i)|&=& |1-x(i)|=1-x(i)\\
&\le& 1 - x(m_{1}) \\
&=& |1 - x(m_{1})|=|y(m_{1}) - x(m_{1})| \\
&<& \varepsilon.
\end{eqnarray*}
\end{itemize}

Therefore $\| z- x\|< \varepsilon$.
\end{proof}

 Next result is a consequence of the fact that the biorthogonal functionals to the basis $ \mathcal{B}_n$ are elements that belong to the unit sphere of the dual of $\ell_\infty ^n$.

\begin{lemma}
\label{cer-co}
If $\varepsilon>0$, $x,y \in \co(\mathcal{B}_n)$ satisfies that 
$$x=\sum_{i=1}^{n} \alpha_iv_i^{n}, \sep y=\sum_{i=1}^{n} \beta_iv_i^{n} \sep \text{and} \sep \Vert x-y\Vert < \varepsilon,$$
then  $$\max \{|\alpha_i - \beta_i| : i\le n\}<\varepsilon.$$
\end{lemma}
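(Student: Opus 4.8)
The plan is to exploit the fact that the expansion coefficients of a vector $v\in\R^n$ in the basis $\mathcal{B}_n$ are recovered by explicit biorthogonal functionals, and that each such functional has norm $1$ as an element of $(\ell_\infty^n)^* = \ell_1^n$. Concretely, by Lemma \ref{B-base} the coefficient of $v_i^n$ in the expansion of $v$ is a fixed linear functional of $v$: the coefficient of $v_1^n$ is $\tfrac{v(1)+v(2)}{2}$, the coefficient of $v_i^n$ for $2\le i\le n-1$ is $\tfrac{v(i+1)-v(i)}{2}$, and the coefficient of $v_n^n$ is $\tfrac{v(1)-v(n)}{2}$. Each of these is of the form $v\mapsto \tfrac12\bigl(\pm v(j)\pm v(k)\bigr)$ with $j\neq k$, hence is given by a vector in $\ell_1^n$ with two entries equal to $\pm\tfrac12$ and the rest $0$, so it has $\ell_1$-norm equal to $1$; equivalently, it is a functional of norm $1$ on $\ell_\infty^n$.

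The key steps, in order, are: (i) write $x-y=\sum_{i=1}^n(\alpha_i-\beta_i)v_i^n$, which is legitimate since $\mathcal{B}_n$ is a basis; (ii) for each fixed $i\le n$, let $\varphi_i\in(\ell_\infty^n)^*$ be the $i$-th biorthogonal functional, i.e.\ the one that reads off the coefficient of $v_i^n$, so that $\varphi_i(x)=\alpha_i$ and $\varphi_i(y)=\beta_i$; (iii) observe from the formulas above that $\|\varphi_i\|_{(\ell_\infty^n)^*}=1$; (iv) conclude
$$
|\alpha_i-\beta_i| = |\varphi_i(x-y)| \le \|\varphi_i\|\,\|x-y\| = \|x-y\| < \varepsilon,
$$
and take the maximum over $i\le n$.

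I do not expect a genuine obstacle here; the statement is essentially the remark made just before the lemma ("the biorthogonal functionals to the basis $\mathcal{B}_n$ are elements that belong to the unit sphere of the dual of $\ell_\infty^n$"), made quantitative. The only point requiring a line of care is verifying $\|\varphi_i\|=1$ rather than merely $\le 1$: since each $\varphi_i$ pairs $x$ with $\tfrac12(\pm x(j)\pm x(k))$, its $\ell_1$-norm is exactly $\tfrac12+\tfrac12=1$, so no constant worse than $1$ appears, and the strict inequality $\|x-y\|<\varepsilon$ passes through intact. One should note the three cases ($i=1$, $2\le i\le n-1$, $i=n$) are handled uniformly by this observation, and the edge case $n=2$ (where there is no middle range) is covered since then the coefficients are $\tfrac{v(1)+v(2)}{2}$ and $\tfrac{v(1)-v(2)}{2}$, again norm-one functionals.
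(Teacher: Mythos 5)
Your proof is correct and is essentially the paper's own argument: the authors also read off the coefficients via the explicit formulas from Lemma \ref{B-base} and bound each difference $|\alpha_i-\beta_i|$ by the triangle inequality on the (at most two) coordinates involved, which is exactly the statement that the biorthogonal functionals lie in $S_{(\ell_\infty^n)^*}$. The only cosmetic difference is that the paper additionally uses $x(1)=y(1)=1$ to get the sharper bound $\varepsilon/2$ for $i=1$ and $i=n$, which is not needed for the stated conclusion.
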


\begin{proof}
By Lemma \ref{B-base} we have that  
$$
\alpha_1=\frac{1+x(2)}{2}, \sep \beta_1=\frac{1+y(2)}{2}, \sep  \alpha_n=\frac{1-x(n)}{2} ,  \sep \beta_n=\frac{1-y(n)}{2}
$$
and
$$
\alpha_i=\frac{x(i+1)-x(i)}{2}, \sep   \beta_i=\frac{y(i+1)-y(i)}{2} \sem \text{for each } \sep 1 <i<n. 
$$
As a consequence we obtain that 
\begin{eqnarray*}
|\alpha_1-\beta_1|&=& \bigg | \frac{x(2)-y(2)}{2}\bigg | \le  \frac{  \Vert x-y \Vert }{2}  < \frac{\varepsilon}{2},  \\
|\alpha_n-\beta_n|& <& \bigg | \frac{y(n)-x(n)}{2}\bigg | < \frac{\varepsilon}{2} \sem \text{and} \\
|\alpha_i-\beta_i|&=& \bigg | \frac{x(i+1)-y(i+1)}{2} + \frac{y(i)-x(i)}{2} \bigg | < \frac{\varepsilon}{2} + \frac{\varepsilon}{2} = \varepsilon \sep \text{if} \sep 1 < i < n            .
\end{eqnarray*}
Hence the proof is finished.
\end{proof}

In order to prove 
	the main result
 condition 4) in Proposition \ref{pro-char} allows to show  easily that a space $Y$ with the AHSp-$\ell_\infty ^n$ also satisfies that the pair $(\ell_\infty ^n,Y)$ has the BPBp. Proving the   converse
 is more delicate.  To this purpose  we  use again the same  reformulation of the  AHSp-$\ell_\infty ^n$, but 
in this case    Lemma \ref{le-close-face} also plays  an essential role.

\begin{theorem}
\label{teo-char}
    Let $Y$ be a Banach space.  The pair $(\ell_\infty^n,Y)$ has the  Bishop-Phelps-Bollob{\'a}s property for operators if and only if $Y$ has the approximate  hyperplane  sum property for $\ell_\infty^n$.
    \newline
    Moreover, if $(\ell_\infty ^n, Y) $ satisfies Definition \ref{def-BPBp} with the function $\eta_n $, then $Y$ has the AHSp-$\ell_\infty ^n $ with $\gamma_n (\varepsilon)= \eta_n \bigl( \frac{\varepsilon}{n(n+1)}\bigr) $. In case that  $Y$ has the AHSp-$\ell_\infty ^n $ for the function $\gamma_n $ (see Definition  \ref{def-AHSPLn}), the pair
    	    	 $(\ell_\infty ^n, Y) $ satisfies BPBp with the function $\eta_n (\varepsilon)= \gamma_n^2 \bigl( \frac{\varepsilon }{n+1} \bigr) $. 
  \end{theorem}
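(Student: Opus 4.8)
The plan is to prove both implications quantitatively, which in particular yields the equivalence. The main tool is the identification $\Phi\colon L(\ell_\infty^n,Y)\to Y^n$, $\Phi(T)=(T(v_i^n))_{i\le n}$, of Proposition~\ref{prop-ident}: it is a linear bijection, $\|T\|=\max\{\|T(\sum_{j}(-1)^{j+1}v_{i_j}^n)\|:(i_1,\dots,i_k)\in\mathcal I_n\}$, and $T\in B_{L(\ell_\infty^n,Y)}$ if and only if $\Phi(T)\in M^n_Y$. Since $M^n_Y=\Phi(B_{L(\ell_\infty^n,Y)})$ and precomposition with a linear isometry of $\ell_\infty^n$ does not change the operator norm, for the ``if'' part I would first reduce, by replacing $T$ by $T\circ\sigma$ and $x_0$ by $\sigma^{-1}(x_0)$ for a suitable signed permutation $\sigma$ of $\ell_\infty^n$, to the case in which $x_0\in O_n=\co(\mathcal B_n)$; this is possible because every point of $S_{\ell_\infty^n}$ lies on a maximal face of $B_{\ell_\infty^n}$, the isometry group of $\ell_\infty^n$ acts transitively on those faces, and $O_n$ meets every orbit within a face. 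Solving the problem for $(T\circ\sigma,\sigma^{-1}(x_0))$ and conjugating back by $\sigma$ leaves all the relevant distances unchanged.

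For sufficiency, assume $Y$ has the AHSp-$\ell_\infty^n$ with a function $\gamma_n$ and put $\eta_n(\varepsilon)=\gamma_n^2\bigl(\tfrac{\varepsilon}{n+1}\bigr)$, so that $\eta_n(\varepsilon)<\gamma_n\bigl(\tfrac{\varepsilon}{n+1}\bigr)<\tfrac{\varepsilon}{n+1}<\varepsilon$. By Proposition~\ref{pro-char}, $Y$ then satisfies condition 4) with $\nu_n=\gamma_n^2$. Given $T\in S_{L(\ell_\infty^n,Y)}$ and $x_0\in S_{\ell_\infty^n}$ with $\|T(x_0)\|>1-\eta_n(\varepsilon)$, reduce as above to $x_0\in O_n$ and write, by Lemmas~\ref{B-base} and~\ref{coB=O}, $x_0=\sum_{i\le n}\alpha_i v_i^n$ with $\alpha_i\ge0$ and $\sum_i\alpha_i=1$. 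Setting $y_i=T(v_i^n)$ we get $(y_i)_{i\le n}\in M^n_Y$ and $\|\sum_i\alpha_i y_i\|=\|T(x_0)\|>1-\nu_n\bigl(\tfrac{\varepsilon}{n+1}\bigr)$, so condition 4) at parameter $\tfrac{\varepsilon}{n+1}$ yields $C\subseteq\{1,\dots,n\}$ and $(z_i)_{i\le n}\in M^n_Y$ with $\sum_{i\in C}\alpha_i>1-\tfrac{\varepsilon}{n+1}$, $\|z_i-y_i\|<\tfrac{\varepsilon}{n+1}$ for all $i$, and $\|\sum_{i\in C}z_i\|=|C|$. Let $S=\Phi^{-1}((z_i)_{i\le n})$, let $s=\sum_{i\in C}\alpha_i\in(0,1]$, and set $\beta_i=\alpha_i/s$ for $i\in C$ and $\beta_i=0$ otherwise, so that $u_0:=\sum_i\beta_i v_i^n=\tfrac1s\sum_{i\in C}\alpha_i v_i^n\in\co(\mathcal B_n)\subseteq S_{\ell_\infty^n}$. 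Any $w^*\in S_{Y^*}$ with $w^*\bigl(\sum_{i\in C}z_i\bigr)=|C|$ necessarily satisfies $w^*(z_i)=1$ for each $i\in C$, hence $w^*(S(u_0))=\tfrac1s\sum_{i\in C}\alpha_i=1$, so $\|S\|=\|S(u_0)\|=1$. Finally $\|u_0-x_0\|\le\sum_i|\beta_i-\alpha_i|=(1-s)+(1-s)=2(1-s)<\tfrac{2\varepsilon}{n+1}\le\varepsilon$, and by Proposition~\ref{prop-ident} $\|S-T\|=\max_{(i_1,\dots,i_k)\in\mathcal I_n}\|\sum_{j}(-1)^{j+1}(z_{i_j}-y_{i_j})\|<\tfrac{n\varepsilon}{n+1}<\varepsilon$; thus $(\ell_\infty^n,Y)$ has the BPBp with $\eta_n$.

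For necessity, assume $(\ell_\infty^n,Y)$ has the BPBp with a function $\eta_n$ and put $\gamma_n(\varepsilon)=\eta_n\bigl(\tfrac{\varepsilon}{n(n+1)}\bigr)<\tfrac{\varepsilon}{n(n+1)}<\varepsilon$; I would verify Definition~\ref{def-AHSPLn} directly for this $\gamma_n$. Let $(y_i)_{i\le n}\in M^n_Y$, $\emptyset\neq A\subseteq\{1,\dots,n\}$ and $y^*\in S_{Y^*}$ with $y^*(y_i)>1-\gamma_n(\varepsilon)$ for every $i\in A$; write $\varepsilon'=\tfrac{\varepsilon}{n(n+1)}$, $T=\Phi^{-1}((y_i)_{i\le n})\in B_{L(\ell_\infty^n,Y)}$ and $x_0=\tfrac1{|A|}\sum_{i\in A}v_i^n$. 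Since $v_i^n(1)=1$ and $|v_i^n(j)|\le1$ for all $i,j$, we have $x_0\in O_n\subseteq S_{\ell_\infty^n}$ and $\|T(x_0)\|\ge y^*(T(x_0))=\tfrac1{|A|}\sum_{i\in A}y^*(y_i)>1-\gamma_n(\varepsilon)$; in particular $\lambda:=\|T\|\in(1-\gamma_n(\varepsilon),1]$. With $\widehat T=T/\lambda\in S_{L(\ell_\infty^n,Y)}$ one has $\|\widehat T(x_0)\|\ge\|T(x_0)\|>1-\gamma_n(\varepsilon)=1-\eta_n(\varepsilon')$, so the BPBp yields $S\in S_{L(\ell_\infty^n,Y)}$ and $u_0\in S_{\ell_\infty^n}$ with $\|S(u_0)\|=1$, $\|u_0-x_0\|<\varepsilon'$ and $\|S-\widehat T\|<\varepsilon'$, whence $\|S-T\|\le\|S-\widehat T\|+(1-\lambda)<\varepsilon'+\gamma_n(\varepsilon)<2\varepsilon'\le\varepsilon$. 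Now Lemma~\ref{le-close-face}, applied to $x_0\in\co(\mathcal B_n)$ and $u_0\in S_{\ell_\infty^n}$, provides $z\in\co(\mathcal B_n)$, say $z=\sum_i\beta_i v_i^n$ with $\beta_i\ge0$ and $\sum_i\beta_i=1$, such that $\|S(z)\|=1$ and $\|z-x_0\|<\varepsilon'$. By Lemma~\ref{cer-co}, $|\beta_i-\alpha_i|<\varepsilon'$, where $\alpha_i=\tfrac1{|A|}$ for $i\in A$ and $\alpha_i=0$ otherwise; hence $\beta_i>\tfrac1{|A|}-\varepsilon'\ge\tfrac1n-\tfrac{\varepsilon}{n(n+1)}>0$ for every $i\in A$. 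Put $z_i=S(v_i^n)$, so $(z_i)_{i\le n}\in M^n_Y$ and $\|z_i-y_i\|\le\|S-T\|<\varepsilon$; moreover $\|\sum_i\beta_i z_i\|=\|S(z)\|=1$ with $\|z_i\|\le1$, $\beta_i\ge0$ and $\sum_i\beta_i=1$, so any $w^*\in S_{Y^*}$ norming $\sum_i\beta_i z_i$ satisfies $w^*(z_i)=1$ whenever $\beta_i>0$, in particular for every $i\in A$, and therefore $\|\sum_{i\in A}z_i\|=|A|$. Thus $(z_i)_{i\le n}$ witnesses Definition~\ref{def-AHSPLn} for $\gamma_n$.

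The genuinely delicate point is necessity. The operator $S$ returned by the BPBp attains its norm at a point $u_0\in S_{\ell_\infty^n}$ close to $x_0$, but $u_0$ need not belong to $\co(\mathcal B_n)$ --- its coordinates in the basis $\mathcal B_n$ need not even be nonnegative --- so it cannot be fed back into the language of $M^n_Y$ directly. Lemma~\ref{le-close-face} is precisely what replaces $u_0$ by a genuine convex combination $z$ of $\mathcal B_n$, still close to $x_0$ and still norming $S$, and Lemma~\ref{cer-co} then converts closeness in $\ell_\infty^n$ into closeness of the convex coefficients; insisting that these coefficients remain strictly positive on $A$ is what forces the factor $n(n+1)$ in $\gamma_n$. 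A secondary nuisance is that $T=\Phi^{-1}((y_i)_{i\le n})$ is a priori only known to lie in $B_{L(\ell_\infty^n,Y)}$, not on its sphere, so one must renormalize before invoking the BPBp and then absorb the error $1-\|T\|<\gamma_n(\varepsilon)$ into the estimate for $\|S-T\|$. On the sufficiency side nothing beyond condition 4) of Proposition~\ref{pro-char} is really needed, apart from the initial symmetry reduction that turns $T(x_0)$ into an honest convex combination of the vectors $y_i=T(v_i^n)$.
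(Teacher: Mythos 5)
Your proof is correct, and the sufficiency direction is essentially the paper's argument verbatim: reduce to $x_0\in O_n$ by an isometry, pass to coordinates in $\mathcal B_n$, apply condition 4) of Proposition \ref{pro-char} at level $\tfrac{\varepsilon}{n+1}$, and reconstruct $S$ and $u_0$. Where you genuinely diverge is in the necessity direction. The paper does not verify Definition \ref{def-AHSPLn} directly; it verifies condition 4) of Proposition \ref{pro-char} for an \emph{arbitrary} convex combination $\sum_i\alpha_i y_i$ with $\nu_n(\varepsilon)=\eta_n(\tfrac{\varepsilon}{n+1})$ (producing the set $C=\{i:\beta_i\neq0\}$ from the coefficients of the corrected norming point and estimating $\sum_{i\in C}\alpha_i$ via Lemma \ref{cer-co}), and then invokes the already-proved implication 4) $\Rightarrow$ 1), which is where the extra rescaling $\varepsilon\mapsto\varepsilon/n$ enters and yields $\gamma_n(\varepsilon)=\eta_n(\tfrac{\varepsilon}{n(n+1)})$. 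You instead start from the set $A$, feed the specific combination $\tfrac1{|A|}\sum_{i\in A}v_i^n$ into the BPBp, and use Lemma \ref{cer-co} only to check that the coefficients $\beta_i$ of the corrected point stay strictly positive on $A$, so that the norming functional certifies $\|\sum_{i\in A}z_i\|=|A|$ directly. Both routes lean on the same two lemmas (\ref{le-close-face} and \ref{cer-co}) and the same renormalization of $T$, and they land on the same modulus; yours is marginally shorter but less modular, since the paper's detour through condition 4) records the quantitative convex-combination version as a byproduct. One small remark on your closing commentary: the factor $n(n+1)$ is not actually forced by the positivity requirement in your argument --- $\varepsilon'=\tfrac{\varepsilon}{n+1}$ already gives $\varepsilon'<\tfrac1{n+1}<\tfrac1{|A|}$ --- it is simply the modulus inherited from the paper's two-step route, which you adopted to match the statement.
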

\begin{proof}
Assume that the pair $(\ell_\infty ^n, Y)$ has the  BPBp and let us fix $0<\varepsilon <1$. Now we define $\nu_n(\varepsilon)= \eta_n\big(\frac{\varepsilon}{n+1}\big)$,
where $\eta_n\big(\frac{\varepsilon}{n+1}\big)<\frac{\varepsilon}{n+1}$ is  the positive real number satisfying the BPBp for  $\frac{\varepsilon}{n+1}$.

Let  $(y_i )_{i \le n} \in M_Y^n$  and assume that
$\sum_{i=1}^{n} \alpha_i y_i $ is convex combination such that

$$
\Big\Vert  \sum_{i=1}^{n} \alpha_iy_i \Big\Vert  >  1 - \nu_n(\varepsilon).
$$
In view of Proposition  \ref{prop-ident}, there is a unique operator $T$  in  $B_{L(\ell_\infty ^n,Y)}$  such that $y_i= T(v_{i}^n)$  for each $1 \le i \le n$.  The element $x_0= \sum_{i=1}^{n} \alpha_i v_i^{n}$ satisfies that  $x_0 \in
S_{\ell_\infty^n}$, and by assumption we know that
\begin{equation}
\label{Tx0}
\bigl\Vert T(x_0) \bigr\Vert = \biggl \Vert \sum_{i=1}^{n} \alpha_i y_i   \biggr
\Vert > 1 - \eta_n\Big(\frac{\varepsilon}{n+1}\Big)> 1 - \frac{\varepsilon}{n+1} >   0.
\end{equation}

Since the pair $(\ell_\infty ^n, Y)$ has the BPBp, there are $u_0\in S_{\ell_\infty^n}$  and  $S\in S_{L(\ell_\infty^n,Y)}$  satisfying the following conditions
\begin{equation}
\label{Su0}
\Vert S(u_0)\Vert =1,  \sem \Vert u_0 - x_0\Vert <\frac{\varepsilon}{n+1}  \sem
\text{and}  \sem \Big \Vert S-\frac{T}{\Vert T\Vert } \Big \Vert
<\frac{\varepsilon}{n+1}.
\end{equation}

By Lemma \ref{le-close-face}, it may be assumed that $u_0$ is in $\co(\mathcal{B}_n)$. As a consequence, there exists
\linebreak[4]
$\{\beta_i: 1\le i \le n\} \subset \R_0^{+}$ such that 
$$
u_0= \sum_{i=1}^{n}\beta_iv_i^{n} \sem \text{and}  \sem \sum_{i=1}^{n}\beta_i =1.
$$
Now we define the sets $A= \{ i\in \{1,\ldots, n\} : \beta_i \neq 0\}$ and $A'=\{1,\ldots, n\} \setminus A$.  By (\ref{Su0}) and in view of Lemma \ref{cer-co}, we obtain that
\begin{equation}
\label{sum-alphai-A}
\sum_{i\in A}\alpha_i= 1- \sum_{i\in A'}\alpha_i = 1- \sum_{i\in A'} |\alpha_i-\beta_i| \ge 1
-\frac{\varepsilon}{n+1} \vert A' \vert  > 1 - \varepsilon.
\end{equation}

Finally we check that $(z_i)_{i \le n}=(S(v^n_i))_{i \le n}$ is the desired element in $ M_Y^n$. Clearly  $(z_i)_{i \le n} \in M_Y^n$  since $S\in S_{ L(\ell_\infty^n, Y)}$ (see Proposition \ref{prop-ident}).  Also, for each $1\le i\le n$, we have that

\begin{eqnarray}
\label{zi,yi}
\Vert  z_i - y_i\Vert  & = &\Vert  S(v_i^n)- T(v_i^n) \Vert   \\
\nonumber
 & \leq& \Big\Vert  S(v_i^n) - \frac{T}{\Vert T\Vert}(v_i^n)  \Big\Vert  + \Big\Vert
\frac{T}{\Vert T\Vert }(v_i^n) - T(v_i^n)  \Big\Vert        \\
\nonumber
 & \leq &\Big\Vert  S -  \frac{T}{\Vert T\Vert }  \Big\Vert  + 1 - \Vert T\Vert
  \\
\nonumber
& < &\frac{\varepsilon}{n+1} +  \frac{\varepsilon}{n+1} \le \varepsilon  \sem \text{(by \eqref{Su0} and  \eqref{Tx0})}.
\end{eqnarray}

By using Hahn-Banach Theorem and  \eqref{Su0},  there is an element $y^* \in S_{Y^*}$ such that $y^*(S(u_0))=1$, that is, 
$$
y^*\bigg(\sum_{i=1}^{n}\beta_i z_i\bigg)=1.
$$
Then,  it is clear that $y^*(z_i)=1$  for each $i\in A$  and  we obtain that 
$$
 \vert A \vert =  y^*  \Bigl( \sum_{i\in A}   z_i  \Bigr) \le  \bigg\Vert  \sum_{i\in A}z_i   \bigg\Vert  \le  \sum_{i\in A}\Vert z_i \Vert \le  \vert A \vert.
$$
As a consequence  $\Vert  \sum_{i\in A}z_i \Vert=\vert A \vert$. In view of  \eqref{sum-alphai-A} and \eqref{zi,yi} we proved  that $Y$ satisfies condition 4) in 
Proposition \ref{pro-char}  for $\nu_n(\varepsilon)= \eta_n\big(\frac{\varepsilon}{n+1}\big)$. Hence $Y$ has the AHSp-$\ell_\infty ^n$ for the function  $\gamma _ n (\varepsilon) = \eta _n\big( \frac{ \varepsilon}{ n(n+1)} \big)$.

Assume that  $Y$ satisfies the  AHSp-$\ell_\infty ^n$. Let  $0<\varepsilon <1$ and
we  write $\eta_n(\varepsilon)= \nu_n(\frac{\varepsilon}{n+1})$, where
$\nu_n(\frac{\varepsilon}{n+1})$ is the positive real number satisfying condition 4)
in Proposition \ref{pro-char} for  $\frac{\varepsilon}{n+1}$.

Assume that $T\in S_{L(\ell_\infty^n,Y)}$ and  $x_0 \in
S_{\ell_\infty^n}$  are such that
$$
\|T(x_0)\| >1-\eta_n (\varepsilon).
$$
Up to a linear surjective isometry on $\ell_\infty ^n$ we can assume that $x_0 \in E_1^n$.                 
Let $\sigma: \{2,\ldots, n\} \rightarrow \{2,\ldots, n\}$ be a bijection such that 
$$x_0(\sigma(i))\le x_0(\sigma(i+1)) \sep \forall i \in \{2,\ldots, n-1\},$$
and $S_\sigma$  the linear isometry  on  $\ell_\infty^n$ given by
$$S_\sigma(x)(1)=x(1), \sep S_\sigma(x)(i)=x(\sigma(i)) \sep \forall i \in \{2,\ldots, n\}.$$
By using $T\circ S_\sigma$ instead of $T$, if needed, we can also  assume, without loss of generality, that $x_0\in O_n$. By Lemma \ref{coB=O} we know that $x_0\in\co (\mathcal{B}_n)$, then there exists $\{\alpha_i: 1\le i \le n\} \subset \R_0^{+}$  such that
$$
x_0=\sum_{i=1}^{n}\alpha_iv_i^n \sem \text{and}  \sem \sum_{i=1}^{n}\alpha_i =1.
$$

In view of Proposition \ref{prop-ident} the element $(y_i)_{i \le n}= (T(v_i ^n))_{i \le n} \in M_Y^n$. By assumption we know that $$
\biggr \Vert \sum_{i=1}^{n}\alpha_i y_i \biggl\Vert  = \Vert T (x_0) \Vert  >1-\nu_n\Big(\dfrac{\varepsilon}{n+1}\Big).
$$
By assumption  there is a (nonempty) set  $A\subset
\{1,\ldots, n\}$  and   $(z_i)_{i \le n} \in M_Y^n$  such that
\begin{equation}
\label{sum-A-alphai}
\sum_{i\in A}\alpha_i>1-\frac{\varepsilon}{n+1}>0, \seg
\|z_i-y_i\|<\frac{\varepsilon}{n+1},  \sep \forall i \le n
\end{equation}
and ´ it is also satisfied
\begin{equation}
\label{A-zi-sum}
\Bigl\Vert \sum _{ i \in A} z_i \Bigr \Vert = \vert A \vert .
\end{equation}

Let $S$ be the unique  element in  $L(\ell_{\infty}^n,Y)$  satisfying that 
$S(v_i^n)=z_i$ for each $1\leq i \leq n$.   In view of \eqref{sum-A-alphai} we can use  Proposition  \ref{prop-ident}  to obtain  that $S \in B_{L(\ell_\infty^n,Y)}$  and  $\| S- T \| < \varepsilon$. The element  $u_0$ given by $u_0=\sum_{i\in A}\frac{\alpha_i}{\sum_{i\in A}\alpha_i}v_i^n$ belongs to $S_{\ell_{\infty}^n}$.  By  \eqref{A-zi-sum} the operator $S$
attains its norm at $u_0$ since
$$
1  =  \frac{ \Vert \sum _{i \in A} \alpha _i z_i \Vert}{  \sum _{i \in A} \alpha _i
}   = \Vert S (u_0) \Vert \le \Vert S \Vert \le 1.
$$
As a consequence  $S \in S_{L(\ell_\infty^n,Y)}$.  If we write $A'=\{1,\ldots, n\} \setminus A$,  we obtain that
\begin{align*}
 \Vert  u_0 - x_0\Vert  & =  \biggl\Vert \sum_{i\in
A}\dfrac{\alpha_i}{\sum_{i\in A}\alpha_i}v_i^n-   \sum_{i=1}^{n}\alpha_i v_{i}^n 
 \bigg\Vert \\
& =  \bigg \| \Big(1-\frac{1}{\sum_{i\in A}\alpha_i}\Big) \sum_{i\in A}\alpha_iv_i^n + \sum_{ i \in A'  }\alpha_i v_{i}^n\bigg \| \\
& \leq \bigg(\frac{1}{\sum_{i\in A}\alpha_i}-1\bigg) \sum_{i\in A}\alpha_i + \sum_{i \in A'} \alpha_i \\
& =2\sum_{i \in A'} \alpha_i \\
&< 2\frac{\varepsilon}{n+1} \le \varepsilon \sem \text{(by \eqref{sum-A-alphai})}.
\end{align*}
We proved that the pair $(\ell_\infty^n,Y)$ has the BPBp with $\eta_n(\varepsilon)= \nu_n(\frac{\varepsilon}{n+1})$.

Lastly, in case that $Y$ satisfies  AHSp-$\ell_\infty ^n$ with the function  $\gamma _n$ we know that  $Y$ satisfies condition 4) in  Proposition \ref{pro-char} for the function $\nu_n = \gamma _n ^2$.  As a consequence of the  above  proof, we deduce that  the pair $(\ell_\infty^n,Y)$ has the BPBp with $\eta_n(\varepsilon)= \gamma _n ^2 \big( \frac{ \varepsilon}{n+1} \big)$. 
\end{proof}

\section{Examples of spaces with the approximate hyperplane sum property for $\ell_\infty^n$}

As a consequence of the characterization stated in Theorem \ref{teo-char} and  known  results  related to the Bishop-Phelps-Bollob\'{a}s property for operators, 
the following classes of Banach spaces have the  AHSp-$\ell_\infty^n$, for any natural number $n$.

\begin{itemize}
	\item  Finite-dimensional spaces (see \cite[Proposition 2.4]{AAGM1}).
	\item  Uniformly convex spaces in view of \cite[Theorem 5.2]{AAGM1}.
	\item Spaces with the property $\beta $ of Lindenstrauss (see \cite[Definition 2.1 and Theorem 2.2]{AAGM1}).
	\item  Uniform algebras  (\cite[Theorem 3.6]{CGK}).
	\item  $C_0(L,Y)$, for any locally compact  Hausdorff space $L$, whenever $Y$ is a space with the AHSp-$\ell_\infty ^n$  (see the argument of \cite[Proposition 2.4]{ABGKM2}).
\end{itemize}

Moreover there is also a nontrivial class of Banach spaces containing  uniformly convex spaces and spaces with the property $\beta $ (of Lindenstrauss) satisfying also the previous property 
(see \cite[Theorem 2.4]{AGKM}).

The  main result of this section states that $L_1 (\mu) $ also shares  the  AHSp-$\ell_\infty^n$, for any positive  measure $\mu$ and any natural number $n$ (see Corollary \ref{cor-L1-AHSP-lin}).
In order to  obtain such result the key idea is to prove this  statement  for  $\ell_1$.

Throughout this section we denote by   $u^* $
 the functional  on $\ell_{1} $ given by
$$
u^* (x)=\sum _{k=1}^\infty x(k) \seg (x \in \ell_1).
$$
On the space $\ell_1$ we consider the usual   order as a sequence space.

\begin{lemma}
	\label{le-auxiliar}	
	Let   $r,s \in \R^+$, $y \in \ell_1$,   $ m \in \N$ and  $\{ x_i : 1 \le i  \le m \}  \subset  \ell_{1}$. Assume that 
	$$
	1-r \le u^*(x_i+y)  \sem \text{and}\sem \Vert x_i+y \Vert \le 1+s  \sep \text{for all} \sep i\le m.
	$$
	Then there exists $w\in \ell_{1}$ such that
	$$
	w\ge y,  \seg
	\Vert w-y\Vert \le m(r+s)  \sem \text{and}   \sem  x_i+w \ge 0 \sep \text{for all}\sep i\le m.
	$$
\end{lemma}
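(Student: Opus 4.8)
The plan is to build $w$ from $y$ by correcting, coordinate by coordinate, the places where some $x_i$ makes $x_i + y$ negative. For a single function $x$ with $1 - r \le u^*(x+y)$ and $\|x+y\| \le 1+s$, the "defect" $\sum_k (x(k)+y(k))^- = \tfrac12\left(\|x+y\| - u^*(x+y)\right) \le \tfrac{(1+s)-(1-r)}{2} = \tfrac{r+s}{2}$. So the positive part $v$ of $x+y$ (i.e. $v(k) = \max\{x(k)+y(k), 0\}$) satisfies $v \ge 0$, $v \ge x+y$, and $\|v - (x+y)\|_1 \le \tfrac{r+s}{2}$; equivalently, setting $w_1 = v - x \ge y$ gives $x + w_1 \ge 0$ and $\|w_1 - y\| \le \tfrac{r+s}{2}$. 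That handles $m=1$ with room to spare.

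For general $m$ I would iterate, but carefully, since correcting for $x_1$ may spoil the estimate for $x_2$. The clean way: define $w$ coordinatewise by $w(k) = y(k) + \max_{1 \le i \le m}\, \bigl( -(x_i(k)+y(k)) \bigr)^+ = \max\Bigl\{ y(k),\ \max_{i \le m}\bigl(-x_i(k)\bigr)\Bigr\}$. Then immediately $w \ge y$ and $x_i + w \ge 0$ for every $i \le m$, since $w(k) \ge -x_i(k)$ for each $i,k$. It remains to bound $\|w - y\|_1 = \sum_k \bigl(w(k) - y(k)\bigr)$. Pointwise, $w(k) - y(k) = \max_i \bigl(-(x_i(k)+y(k))\bigr)^+ \le \sum_{i=1}^m \bigl(-(x_i(k)+y(k))\bigr)^+ = \sum_{i=1}^m (x_i(k)+y(k))^-$. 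Summing over $k$ and using the single-function defect bound above,
$$
\|w - y\| \le \sum_{i=1}^m \sum_k (x_i(k)+y(k))^- \le \sum_{i=1}^m \frac{r+s}{2} = \frac{m(r+s)}{2} \le m(r+s),
$$
which is exactly the required inequality (in fact a factor $2$ better).

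**The main thing to be careful about** is the identity $\sum_k (a(k))^- = \tfrac12(\|a\|_1 - u^*(a))$ for $a \in \ell_1$, which is just $\|a\|_1 = \sum a^+ + \sum a^-$ and $u^*(a) = \sum a^+ - \sum a^-$; this is where both hypotheses $u^*(x_i+y) \ge 1-r$ and $\|x_i+y\| \le 1+s$ get used, and it is the only non-formal step. Everything else — that $w \in \ell_1$ (it is squeezed between $y$ and $y + \sum_i (x_i+y)^-$, both in $\ell_1$), that the coordinatewise $\max$ is measurable/well-defined, and the pointwise inequality $\max_i c_i^+ \le \sum_i c_i^+$ for reals $c_i$ — is routine. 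I would also remark that the same argument, with $u^*$ replaced by integration against the sign of a suitable functional and $\ell_1$ by $L_1(\mu)$, will later give the analogous statement over an arbitrary positive measure, which is presumably why the lemma is isolated here.
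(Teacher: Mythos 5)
Your proof is correct, and the object you construct is in fact the same one the paper uses: the paper invokes the $m=1$ case (quoted from an external lemma in an earlier article), obtaining $w_i=\max\{y,-x_i\}$ with $\Vert w_i-y\Vert\le r+s$, and then sets $w=\max_{i\le m}w_i$, which coincides with your $w(k)=\max\{y(k),\max_{i\le m}(-x_i(k))\}$. The two differences are to your credit. First, you prove the $m=1$ step from scratch via the identity $\sum_k a(k)^-=\tfrac12(\Vert a\Vert_1-u^*(a))$, which makes the argument self-contained and gives the sharper single-function bound $\tfrac{r+s}{2}$ instead of $r+s$. Second, where the paper controls $\Vert w-y\Vert$ by partitioning $\N$ into sets $A_i$ on which $w=w_i$ and summing $\Vert(w_i-y)\chi_{A_i}\Vert\le\Vert w_i-y\Vert$, you use the pointwise inequality $\max_i c_i^+\le\sum_i c_i^+$; both yield the linear-in-$m$ growth, but yours lands at $\tfrac{m(r+s)}{2}$, a factor of $2$ better than required. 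All the routine points (that $w\in\ell_1$ because $0\le w-y\le\sum_i(x_i+y)^-$, and that $w\ge y$ and $x_i+w\ge 0$) are handled correctly.
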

\begin{proof} 
	The statement for $m=1$ is a consequence of \cite[Lemma 4.5]{ADS}.   By using this fact  we prove the general case. 
	Let $m \in \N$,  $y \in \ell_1$, $\{ x_i: i \le m\} \subset \ell_1$ and assume that  
	$$
	1-r \le u^*(x_i+y)  \sem \text{and}\sem \Vert x_i+y \Vert \le 1+s  \sep \text{for all} \sep i\le m.
	$$
	By the result for $m=1$, there is a subset $\{ w_i: i \le m \} \subset \ell_1$ such that 
	\begin{equation}
	\label{le-case-1}
	w_i\ge y,  \seg
	\Vert w_i-y\Vert \le r+s  \sem \text{and}   \sem  x_i+w_i \ge 0 \sep \text{for all}\sep i\le m.
	\end{equation}
	Take $w= \max \{ w_i: 1 \le i \le m\}$.  There  is a family of pairwise disjoint sets $\{A_i: i \le m\}   \subset \N$  such that $\N = \cup _{i \le m } A_i$
	and  satisfying also that
	\begin{equation}
	\label{w-wi-Ai}
	w \chi _{A_i} = w_i  \chi _{A_i} \sem  \text{for all} \sep i\le m.
	\end{equation}
	As a consequence $w= \sum _{ i=1}^m w_i \chi _{ A_i} \in \ell_1$ and in view of \eqref{le-case-1} it is  satisfied that
	\begin{equation}
\label{w-big}
w\ge w_i\ge  y \sem \text{and}     \sem  x_i+w \ge x_i + w_i  \ge 0 \sep \text{for all }\sep i\le m.
\end{equation}
Since $\{ A_i: i \le m\}$ is a partition of $\N$  we also have that
	\begin{eqnarray}
\label{w-close-y}
\Vert w-y \Vert  &=&   \sum _{i=1}^m  \Vert (w-y) \chi _{ A_i} \Vert  \notag\\
&=&  \sum _{i=1}^m  \Vert (w_i-y) \chi _{ A_i} \Vert  \sep \text{(by \eqref{w-wi-Ai})}  \notag \\
&\le &   \sum _{i=1}^m  \Vert (w_i-y)  \Vert  \\
& \le &  m(r+s)   \sep \text{(by \eqref{le-case-1})}.  \notag 
\end{eqnarray}
In view  of \eqref{w-big} and \eqref{w-close-y} the proof is finished.
\end{proof}

\begin{theorem}
\label{l1-AHSPLn}
For  each
 $n\in \N$ there exists a function $\rho_n: ]0,1[ \llll  ]0,1[$
 with the following properties 
\begin{enumerate}
\item[1)]  $\rho_n(\varepsilon) < \varepsilon$ for all  $\varepsilon \in ]0,1[$,
\item[2)]  For each  $0 < \varepsilon < 1$,  $ (y_i)_{i \le n}  \in M_{\ell_{1}}^n$ and $n_0\le n$  
 such that
 $$
 u^* (y_i) > 1 - \rho_n (\varepsilon) \sep  \forall i \le n_0,
 $$ 
 there exists $ (z_i)_{i \le n}  \in M_{\ell_{1}}^n$   
satisfying the following two conditions
\begin{itemize}
\item[a)] $ \Vert z_i - y_i \Vert < \varepsilon$, for all $i \le n$  \  and
\item[b)] $z_i\ge 0$ and $u^*(z_i)=1$,  for all  $i \le n_0.$
\end{itemize}
\end{enumerate}
\end{theorem}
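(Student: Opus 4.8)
The plan is to argue by induction on $n$. For a fixed $n$ the statement is a positivity-strengthened form of condition 3) of Proposition \ref{pro-char} for $Y=\ell_1$ tested against the single functional $u^*$: one additionally asks that $(z_i)$ be positive on its first $n_0$ coordinates, which together with $u^*(z_i)=1$ automatically gives $\|z_i\|=1$ and hence $\bigl\|\sum_{i\le n_0}z_i\bigr\|=n_0$. (Once this is known for $u^*$, composing with the coordinate–sign–change isometries of $\ell_1$ yields condition 3) for a $1$-norming set, hence AHSp-$\ell_\infty^n$ via Theorem \ref{teo-char}.) The base cases $n=0,1$ are immediate: for $n=1$, $n_0=1$ one has $(y_1)\in M_{\ell_1}^1$ with $u^*(y_1)>1-\rho_1(\varepsilon)$, so $u^*((y_1)^-)=\tfrac12(\|y_1\|-u^*(y_1))<\tfrac12\rho_1(\varepsilon)$; discarding the negative part and, if needed, restoring the total mass to $1$ with a small positive mass on a fresh coordinate (equivalently, invoking Lemma \ref{le-auxiliar} with $m=1$, $y=0$, $x_1=y_1$ and then normalizing) produces $z_1\ge0$, $u^*(z_1)=1$, $\|z_1-y_1\|<\varepsilon$ as soon as $\rho_1(\varepsilon)\le\varepsilon/2$.

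For the inductive step, assume $\rho_n$ has been built and let $(y_i)_{i\le n+1}\in M_{\ell_1}^{n+1}$, $n_0\le n+1$, with $u^*(y_i)>1-\rho_{n+1}(\varepsilon)$ for $i\le n_0$. The organizing fact is that the constraints defining $M_{\ell_1}^{n+1}$ split: $(y_i)_{i\le n+1}\in M_{\ell_1}^{n+1}$ iff $(y_i)_{i\le n}\in M_{\ell_1}^{n}$ and, for every even-length increasing tuple $(i_1,\dots,i_{2l})$ in $\{1,\dots,n\}$ (including the empty one), $\bigl\|y_{n+1}+\sum_{j=1}^{2l}(-1)^{j+1}y_{i_j}\bigr\|\le1$, since a tuple of $\mathcal I_{n+1}$ either lies in $\mathcal I_n$ or is a tuple of $\mathcal P_n$ with $n+1$ appended. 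So I first apply the inductive hypothesis to $(y_1,\dots,y_n)$ with the integer $\min(n_0,n)$ and a smaller tolerance $\varepsilon'<\varepsilon$, obtaining $(z_1,\dots,z_n)\in M_{\ell_1}^{n}$ with $z_i\ge0$, $u^*(z_i)=1$ for $i\le\min(n_0,n)$, and $\|z_i-y_i\|<\varepsilon'$ for all $i\le n$. It then remains to pick $z_{n+1}$ close to $y_{n+1}$ — and, when $n_0=n+1$, also $z_{n+1}\ge0$ with $u^*(z_{n+1})=1$ — so that all appended-tuple inequalities $\bigl\|z_{n+1}+\sum(-1)^{j+1}z_{i_j}\bigr\|\le1$ hold. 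Here I use Lemma \ref{le-y-y1} (the $\tau_{n+1}$-symmetry) to put the configuration in a convenient normal form, and then Lemma \ref{le-auxiliar} with $m$ equal to the number of relevant appended tuples: each vector $y_{n+1}+\sum(-1)^{j+1}z_{i_j}$ has $u^*$ bounded below and norm bounded above by quantities close to their ideal values — this is where the decomposition above and the fact that $(y_i)_{i\le n+1}$ already lay in $M_{\ell_1}^{n+1}$ (and, when $n_0=n+1$, that $u^*(y_{n+1})$ is near $1$) are exploited — so the push-up lemma gives a single $w\ge y_{n+1}$ with $\|w-y_{n+1}\|$ small making all of them nonnegative; since the $\ell_1$-norm agrees with $u^*$ on the positive cone, nonnegativity plus the $u^*$-bounds forces each such norm down to at most $1$, and a final normalization of $w$ restores $u^*(w)=1$ when needed. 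Taking $z_{n+1}=w$ and checking membership in $M_{\ell_1}^{n+1}$ by a Hahn-Banach argument as in Propositions \ref{prop-ident} and \ref{pro-char} completes the step, with $\rho_{n+1}$ defined as a sufficiently small function of $\rho_n$ and $\varepsilon$.

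The step I expect to be the main obstacle is precisely this last reconciliation. Unlike the finite-dimensional or property-$\beta$ situations, $M_{\ell_1}^{n+1}$ is cut out by exponentially many \emph{coupled} norm inequalities, so positivity and $u^*=1$ cannot be arranged coordinate by coordinate, and a perturbation that repairs one inequality may violate another; the whole point of Lemma \ref{le-auxiliar} in its $m>1$ form is that one common shift can drive an entire finite family of $\ell_1$-vectors into the positive cone at a cost only linear in $m$, and rewriting the norm constraints as positivity constraints is what makes it applicable. Carrying this out requires tracking how the tolerance degrades through the recursion — each level loses a factor depending on $n$ and on the number of tuples, which both forces the recursive definition of $\rho_{n+1}$ and explains the length of the proof — together with a careful case split (notably $n_0\le n$ versus $n_0=n+1$, which governs whether $u^*(y_{n+1})$ is controlled).
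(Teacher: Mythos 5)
Your proposal follows essentially the same route as the paper's proof: induction on $n$ with the base case handled by discarding the negative part and restoring mass on $e_1$, the decomposition of $\mathcal{I}_{n+1}$ into $\mathcal{I}_n$ and $\mathcal{P}_n$-tuples with $n+1$ appended, the application of Lemma \ref{le-auxiliar} with $m=|\mathcal{P}_n|$ to push the whole family of alternating sums into the positive cone at once (using that $u^*$ of an even alternating sum of the normalized $b_i$ vanishes), the final renormalization, and the case split $n_0\le n$ versus $n_0=n+1$. The only cosmetic differences are that the paper does not need Lemma \ref{le-y-y1} or a Hahn--Banach argument at this stage (membership in $M_{\ell_1}^{n+1}$ is checked by a direct computation after shrinking by the factor $1/(1+\varepsilon/8)$), but these do not affect the correctness of your plan.
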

\begin{proof}
We define inductively the function $\rho_n$ as follows.
 %
 For $n=1$ we check that the function $\rho_1: ]0,1[ \llll \R^+$ defined by
 $\rho_1(\varepsilon)=\frac{\varepsilon}{2}$ satisfies condition 2).\\
Assume that $0<\varepsilon<1$ and $y_1 \in B_{\ell_{1}}$  satisfy that $u^*(y_1)>1 - \rho_1(\varepsilon)$. If we define  $P_1$ and $a_1$ by 
$$
P_1=\{ k \in \N : y_1(k) \ge 0\}  \seg \text{and} \seg a_1 = y_1 \chi_{P_1},
$$
then  we  clearly have that
$$
1-\rho_1(\varepsilon) < u^*(y_1) \le  u^*(a_1) =\Vert a_1 \Vert \le \Vert y_1\Vert \le 1 \sep \text{and} \sep a_1 \ge 0.
$$
As a consequence  
$$
\Vert a_1-y_1 \Vert = \Vert y_1\chi_{\N \setminus P_1}\Vert=\Vert y_1\Vert - \Vert a_1\Vert  \le 1 - \Vert a_1 \Vert < \rho_1(\varepsilon).
$$
Now we define $z_1= a_1 + (1-\|a_1\|)e_1$.  So  it is satified  $z_1\ge 0$, $\|z_1\|=1$ and
$$
\|z_1-y_1\|\le \|a_1 - y_1\| + 1-\|a_1\| < \rho_1(\varepsilon) + \rho_1(\varepsilon) = \varepsilon.
$$
 Since $z_1 \ge 0$ and $\Vert z_1 \Vert =1$ we also have that  $u^*(z_1)=1$, so we proved the assertion stated for $n=1$.

 Assume that  $n$ is a natural number  and there is a function $\rho _n$ for which the statement holds true. We define  $\rho_{n+1} $  by
$$
\rho_{n+1}(\varepsilon)=\rho_{n}\bigg( \frac{\varepsilon}{8(n+2)|\mathcal{P}_{n}|}\bigg) \sem (\varepsilon \in ]0,1[).
$$
By assumption the function  $\rho_{n+1}: ]0,1[ \llll ]0,1[$ satisfies (1).

Now we prove that  condition 2) is also  satisfied. Assume that $0<\varepsilon<1$, $ 1 \le n_0 \le n+1$ and the element  $(y_i)_{i \le n+1}  \in M_{\ell_{1}}^{n+1}$ satisfy that
$$ 
u^* (y_i) > 1 - \rho_{n+1} (\varepsilon), \sem  \forall i \le n_0.
$$ 
Since  $(y_i)_{i \le n+1} \in M_{\ell_{1}}^{n+1}$  we know that $(y_i)_{i \le n}$ is an element in  $M_{\ell_{1}}^{n}$. By assumption there  is an element $(b_i)_{i \le n} \in M_{\ell_{1}}^{n}$  satisfying   the following two properties 
\begin{eqnarray}
\label{bi-yi}
\|b_i-y_i\| < \frac{\varepsilon}{8(n+2)|\mathcal{P}_{n}|},\sep \sep \forall i\le n 
\end{eqnarray}
and
\begin{eqnarray}
\label{bi-u*}
b_i\ge 0\sep \text{and} \sep u^*(b_i)=1 \sep \sep  \forall i \le \min \{n_0, n\}.
\end{eqnarray}
For each $1 \le i \le n $ we define 
$$
z_i= \frac{b_i}{1 +\frac{\varepsilon}{8}} + \bigg(1 - \frac{1}{1 +\frac{\varepsilon}{8}}\bigg)e_1.
$$
Now we  check that $(z_i)_{i \le n} \in M_{\ell_{1}}^{n}$. If $(i_1, \ldots , i_k) \in \mathcal{I}_n$, since  $(b_i) _{ i \le n} \in  M_{\ell_{1}}^{n}$,   then we  obtain that  
\begin{eqnarray}
\label{suma-n-z}
 \biggl\Vert  \sum_{j=1}^{k}(-1)^{j+1}z_{i_j}\biggl\Vert  
 & = & \biggl\Vert \frac{1}{1 +\frac{\varepsilon}{8}} \sum_{j=1}^{k}(-1)^{j+1}b_{i_j} + \bigg(1 - \frac{1}{1 +\frac{\varepsilon}{8}}\bigg)e_1 \bigg\Vert \\
 \nonumber
& \le&  \frac{1}{1 +\frac{\varepsilon}{8}} + 1 - \frac{1}{1 +\frac{\varepsilon}{8}} =1.
\end{eqnarray}
From \eqref{bi-u*} we also  have that  
$$
z_i\ge 0 \sep \text{and} \sep u^*(z_i)=1, \sep \sep \forall i \le \min \{n_0, n\}.
$$
On the other hand, if $i\le n$ then
\begin{eqnarray}
\label{zi-yi}
\|z_i- y_i\|
& \le& \bigg \|z_i - \frac{b_i}{1+ \frac{\varepsilon}{8}} \bigg \| + \bigg \| \frac{b_i}{1+ \frac{\varepsilon}{8}} - b_i \bigg \| + \|b_i-y_i\| \\
\nonumber
& <&  2\bigg(1 - \frac{1}{1 +\frac{\varepsilon}{8}}\bigg) +  \frac{\varepsilon}{8(n+2)|\mathcal{P}_{n}|} \sem \text{(by \eqref{bi-yi})}\\
\nonumber
& <& \frac{\varepsilon}{4}+ \frac{\varepsilon}{8} < \varepsilon.
\end{eqnarray}

It only remains to define a vector $z_{n+1}\in B_{\ell_{1}}$ such that  $ (z_i)_{i \le n+1}  \in M_{\ell_{1}}^{n+1}$, $\|z_{n+1}- y_{n+1}\| < \varepsilon$ and  in the case  that $n_0=n+1$ we also need that  conditions  $z_{n+1} \ge 0$ and  $u^*(z_{n+1})=1$ are  satisfied. For this  last step we consider the following  two cases.

\noindent 
 \underline{Case 1}: Assume that $n_0 <n+1$.\\
In such case  we define $z_{n+1}=\frac{y_{n+1}}{1 +\frac{\varepsilon}{8}}$. Since $y_{n+1} \in B_{\ell_1}$ it is clear that $\|z_{n+1}\|\le 1$ and
$$
\|z_{n+1}-y_{n+1}\|\le 1 - \frac{1}{1+\frac{\varepsilon}{8}}< \varepsilon.
$$
In view of  \eqref{suma-n-z}, having in mind that $z_{n+1}\in B_{\ell_1}$, in order to prove  that $(z_i)_{i \le n+1} \in M_{\ell_1}^{n+1}  $,  
	 it suffices to show that the condition defining $M_{\ell_1}^{n+1} $  is satisfied for linear combinations  containing at least three elements including  $z_{n+1}$. So  let us fix  $(i_1, \ldots , i_k) \in \mathcal{P}_{n}$.  By using that $(y_i)_{i \le n+1}\in M_{ \ell_1}^{n+1} $ we obtain that  
\begin{align*}
 \biggl\Vert  \sum_{j=1}^{k}(-1)^{j+1}z_{i_j} + z_{n+1}\biggl\Vert  
 & =  \biggl\Vert \frac{1}{1 +\frac{\varepsilon}{8}} \sum_{j=1}^{k}(-1)^{j+1}b_{i_j} +  \frac{y_{n+1}}{1 + \frac{\varepsilon}{8}} \bigg\Vert \\
& \le \frac{\|  \sum_{j=1}^{k}(-1)^{j+1}(b_{i_j}-y_{i_j})\| + \Vert  \sum_{j=1}^{k}(-1)^{j+1}y_{i_j}+ y_{n+1}\|}{1+\frac{\varepsilon}{8}} \\
& < \frac{\frac{\varepsilon k}{8(n+2)|\mathcal{P}_{n}| } +1 }{1+\frac{\varepsilon}{8}} < 1\sem \text{(by \eqref{bi-yi})}.
\end{align*}

So  the proof is finished in case 1.

\noindent 
\underline{Case 2}: Assume that  $n_0 =n+1$.
Let define   $P$ and $a$ by 
$$
P=\{ k \in \N : y_{n+1}(k) \ge 0\}  \seg \text{and} \seg a = y_{n+1} \chi_{P}.
$$
By assumption $u^*(y_{n+1} ) >  1-\rho_{n+1}(\varepsilon) $, so  we have that
\begin{eqnarray}
\label{a-y_n+1}
\sem 
a \ge 0, \sep  1-\rho_{n+1}(\varepsilon)<u^*(a) = \|a\| \le 1 \sep \text{and} \sep \|a-y_{n+1}\|<\rho_{n+1}(\varepsilon).
\end{eqnarray}
Note that for each $(i_1, \ldots , i_k) \in \mathcal{P}_{n}$, by using \eqref{bi-u*}  and \eqref{a-y_n+1} we obtain that  
\begin{eqnarray}
\label{u*a}
1- \frac{\varepsilon}{8(n+2)|\mathcal{P}_{n}|}  <  u^*(a)= u^* \bigg( \sum_{j=1}^{k}(-1)^{j+1}b_{i_j} + a \bigg)
\end{eqnarray}
and
\begin{align*}
\bigg \| \sum_{j=1}^{k}(-1)^{j+1}b_{i_j} +a \bigg \| 
&\le \bigg \| \sum_{j=1}^{k}(-1)^{j+1}(b_{i_j} - y_{i_j})\bigg \|  +\bigg \| \sum_{j=1}^{k}(-1)^{j+1}y_{i_j} +y_{n+1}\bigg \|  \\
& \sep  \sep + \|a- y_{n+1}\| \\
 &<  1 +  (n+1)\frac{\varepsilon}{8(n+2)|\mathcal{P}_{n}|}\sem \text{(by \eqref{bi-yi} and \eqref{a-y_n+1})}.
\end{align*}
In view of \eqref{u*a} and the previous inequalities we can apply Lemma \ref{le-auxiliar}. Hence  there is $w\in \ell_1$ satisfying the following conditions 
\begin{eqnarray}
\label{w-a}
w \ge a, \sep \sep \|w-a\| \le |\mathcal{P}_{n}|(n+2)\frac{\varepsilon}{8(n+2)|\mathcal{P}_{n}|}= \frac{\varepsilon}{8}\\
\label{b+w-m-0}
\sum_{j=1}^{k}(-1)^{j+1}b_{i_j}+w\ge 0 \sep \sep \text{for each} \sep \sep (i_1, \ldots , i_k) \in \mathcal{P}_{n}.
\end{eqnarray}

 By \eqref{a-y_n+1} $a\ge0$, so  in view of \eqref{a-y_n+1}  and \eqref{w-a} we have that 
\begin{equation}
\label{sum-bij-w-pos}
1- \frac{\varepsilon}{8(n+2)|\mathcal{P}_{n}|}< \|a\|\le \|w\| \le \|w-a\| + \|a\| \le \frac{\varepsilon}{8} + 1.
\end{equation}
 Hence
$$
\frac{1 - \frac{\varepsilon}{8(n+2)|\mathcal{P}_{n}|}}{1 + \frac{\varepsilon}{8}}< \frac{\|w\|}{1+\frac{\varepsilon}{8}}\le1,
$$
so
\begin{equation}
\label{1-norma-w}
0\le1-\frac{\|w\|}{1+\frac{\varepsilon}{8}}< 1 - \frac{1 - \frac{\varepsilon}{8(n+2)|\mathcal{P}_{n}|}}{1 + \frac{\varepsilon}{8}}< \frac{\varepsilon}{8} + \frac{\varepsilon}{8(n+2)|\mathcal{P}_{n}|}.
\end{equation}
Finally we define  
$$
z_{n+1}=\frac{w}{1+\frac{\varepsilon}{8}} + \bigg( 1-\frac{\|w\|}{1+\frac{\varepsilon}{8}} \bigg)e_1.
$$
 Since $w \ge 0$, in view of \eqref{1-norma-w}, it  is clear that $z_{n+1}\ge 0$ and $u^*(z_{n+1})=1$.  Now we check that $z_{n+1}$  is close to $y_{n+1}$ as follows 
\begin{align}
\label{zn+1-yn+1}
\nonumber
\|z_{n+1}-y_{n+1}\| 
&\le \bigg\|z_{n+1}- \frac{w}{1+\frac{\varepsilon}{8}} \bigg\|  +   \bigg\| \frac{w}{1+\frac{\varepsilon}{8}} - w\bigg\| + \|w-a\| + \|a-y_{n+1}\| \\
& <   \frac{\varepsilon}{4} + \frac{\varepsilon}{4(n+2)|\mathcal{P}_{n}|} +   \bigg( 1-\frac{1}{1+\frac{\varepsilon}{8}}\bigg) \|w\|   \sep \text{(by  \eqref{a-y_n+1}, \eqref{w-a} and \eqref{1-norma-w})} \\
\nonumber
&<  \frac{\varepsilon}{2}+ \frac{\varepsilon}{8} < \varepsilon \sem \text{(by \eqref{sum-bij-w-pos})}.
\end{align}

Lastly, if $(i_1, \ldots , i_k) \in \mathcal{P}_{n}$,
from \eqref{b+w-m-0} and  \eqref{1-norma-w} and by using also \eqref{bi-u*}  and  $w \ge 0$ 
 we  obtain that
\begin{align}
\label{sum-z-zn+1}
\nonumber
\bigg\| \sum_{j=1}^{k}(-1)^{j+1}z_{i_j} + z_{n+1}\bigg \| 
&= \bigg\| \frac{1}{1+\frac{\varepsilon}{8}}\bigg( \sum_{j=1}^{k}(-1)^{j+1}b_{i_j} + w\bigg) +   \bigg ( 1- \frac{\|w\|}{1 +\frac{\varepsilon}{8}} \bigg)e_1 \bigg\|\\
& = \frac{1}{1+\frac{\varepsilon}{8}}u^* \bigg(   \sum_{j=1}^{k}(-1)^{j+1}b_{i_j} + w   \bigg) +   1- \frac{\|w\|}{1 +\frac{\varepsilon}{8}}
 \\
\nonumber
 &= \frac{1}{1+\frac{\varepsilon}{8}}u^*(w) + 1 - \frac{u^*(w)}{1+\frac{\varepsilon}{8}}=1.
\end{align}
Since $(z_i)_{i \le n} \in M_{\ell_{1}}^{n}$ and by  \eqref{suma-n-z} and \eqref{sum-z-zn+1}, we conclude that $(z_i)_{i \le n+1} \in M_{\ell_{1}}^{n+1}$.
 In view of \eqref{zi-yi} and \eqref{zn+1-yn+1}, the proof is also finished  in Case 2.

As a consequence, we showed  that  $\ell_{1}$ verifies the statement for $n+1$ with $\rho_{n+1}(\varepsilon)=\rho_{n}\big( \frac{\varepsilon}{8(n+2)|\mathcal{P}_{n}|}\big)$.
\end{proof}

\begin{corollary}
	\label{co-ell1-AHSP-n}
The space $\ell_1$ has the      approximate  hyperplane  sum property for $\ell_\infty^n$, for any positive integer $n$.

Indeed   for each $n \in \N$ there is a function $\gamma_n$ such that $\ell_1$ and $\ell_1 ^m$ has the   approximate  hyperplane  sum property for $\ell_\infty^n$ with the function $\gamma _n$ for any natural number $m$.  We denoted by $\ell_1 ^m$  the linear space $\R^m$ endowed with the $\ell_1$-norm.
\end{corollary}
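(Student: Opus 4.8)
The plan is to derive the Corollary from Theorem \ref{l1-AHSPLn} via condition 3) of Proposition \ref{pro-char}, so that all the genuine work sits in Theorem \ref{l1-AHSPLn}. The single point that requires an idea is that Theorem \ref{l1-AHSPLn} only controls the distinguished functional $u^*$, whereas condition 3) asks for the analogous statement to hold for \emph{every} functional of some $1$-norming subset of $S_{\ell_1^*}=S_{\ell_\infty}$; the bridge will be the family of diagonal (sign-change) isometries of $\ell_1$.

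First I would fix the $1$-norming set $B:=\{\phi\in\ell_\infty:\ |\phi(k)|=1\ \text{for all }k\}\subset S_{\ell_\infty}$; it is $1$-norming since for $x\in\ell_1$ a sign pattern of $x$ lies in $B$ and realises $\|x\|_1$. For $\phi\in B$ let $U_\phi\colon\ell_1\to\ell_1$ be given by $U_\phi(x)(k)=\phi(k)x(k)$; this is a surjective linear isometry with $U_\phi^2=\operatorname{id}$ and $u^*\circ U_\phi=\phi$, and, being a linear isometry, it maps $M^n_{\ell_1}$ onto itself (it commutes with the alternating sums in Notation \ref{not-MnY} and preserves the norm). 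Now take $(y_i)_{i\le n}\in M^n_{\ell_1}$, $n_0\le n$ and $\phi\in B$ with $\phi(y_i)>1-\rho_n(\varepsilon)$ for $i\le n_0$, where $\rho_n$ is the function of Theorem \ref{l1-AHSPLn} (so $0<\rho_n(\varepsilon)<\varepsilon$). Apply Theorem \ref{l1-AHSPLn} to $(U_\phi(y_i))_{i\le n}\in M^n_{\ell_1}$, which satisfies $u^*(U_\phi(y_i))=\phi(y_i)>1-\rho_n(\varepsilon)$ for $i\le n_0$; it yields $(\tilde z_i)_{i\le n}\in M^n_{\ell_1}$ with $\|\tilde z_i-U_\phi(y_i)\|<\varepsilon$ for all $i$ and $\tilde z_i\ge 0$, $u^*(\tilde z_i)=1$ for $i\le n_0$. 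Putting $z_i:=U_\phi(\tilde z_i)$ we get $(z_i)_{i\le n}\in M^n_{\ell_1}$, $\|z_i-y_i\|=\|\tilde z_i-U_\phi(y_i)\|<\varepsilon$, and $\phi(z_i)=u^*(\tilde z_i)=1$ for $i\le n_0$, hence $n_0=\phi\bigl(\sum_{i\le n_0}z_i\bigr)\le\bigl\|\sum_{i\le n_0}z_i\bigr\|\le n_0$ (the last step because $M^n_{\ell_1}\subset(B_{\ell_1})^n$). So $\ell_1$ satisfies condition 3) of Proposition \ref{pro-char} for $B$ with the very function $\rho_n$, and therefore has the AHSp-$\ell_\infty^n$ with $\gamma_n(\varepsilon)=\tfrac14\rho_n^2\!\bigl(\tfrac\varepsilon n\bigr)$.

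For $\ell_1^m$ I would identify it with the sequences in $\ell_1$ supported on $\{1,\dots,m\}$ and check that Theorem \ref{l1-AHSPLn} holds verbatim there, with the \emph{same} $\rho_n$: every operation in its proof and in that of Lemma \ref{le-auxiliar} — restriction to the positivity set of a vector, adding a multiple of $e_1$ (which lies in $\ell_1^m$), coordinatewise maxima, and rescaling — preserves being supported on $\{1,\dots,m\}$, while the norm and $u^*$ restrict correctly. Running the previous paragraph for $\ell_1^m$ with the $1$-norming set $\{\phi\in\ell_\infty^m:|\phi(k)|=1\ \forall k\le m\}$ and the diagonal isometries of $\ell_1^m$ then gives the AHSp-$\ell_\infty^n$ for $\ell_1^m$ with the same $\gamma_n(\varepsilon)=\tfrac14\rho_n^2(\tfrac\varepsilon n)$, independent of $m$. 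The main thing to be careful about is exactly this uniformity in $m$: it holds because, as the induction in Theorem \ref{l1-AHSPLn} shows, $\rho_n$ depends only on $n$ and on the combinatorial quantities $|\mathcal P_k|$, $k<n$, and never on the dimension of the space.
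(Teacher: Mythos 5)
Your proposal is correct and follows essentially the same route as the paper: verify condition 3) of Proposition \ref{pro-char} for the $1$-norming set of extreme points of $B_{\ell_1^*}$ by conjugating with the diagonal sign-change isometries to reduce to the functional $u^*$ handled in Theorem \ref{l1-AHSPLn}, and observe that the same argument and the same $\rho_n$ work verbatim for $\ell_1^m$. The only cosmetic difference is that you make the uniformity in $m$ and the final modulus $\gamma_n(\varepsilon)=\tfrac14\rho_n^2(\varepsilon/n)$ explicit, which the paper leaves implicit.
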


\begin{proof}
It suffices to show that $\ell_1$ satisfies condition 3) of Proposition \ref{pro-char}. The same argument can be applied  to $\ell_1 ^m$ since  the proof of
Theorem \ref{l1-AHSPLn} is also valid for this space, for any natural number $m$. In the last case we obtain  additionally that there is  a function  $\rho _n$ satifying  Definition \ref{def-AHSPLn}  for the space
$\ell_1 ^m$  that does not depend on $m$.

Assume that the function $\rho_n$ satisfies Theorem \ref{l1-AHSPLn}. We show that the same function also satisfies condition 3) in Proposition \ref{pro-char} for the $1$-norming set $B=\ext(B_{\ell^*_1})$.\\
Assume that $(y_i)_{i \le n} \in M_{\ell_{1}}^{n}$, $n_0\le n$ and $y^*\in B$ satisfy $y^* (y_i) > 1 - \rho_n (\varepsilon) $ for each $i \le n_0$.  It is well known and inmediate that $\ext(B_{\ell^*_1})=\{v^* \in \ell^*_1 :  \vert v^*(e_n) \vert=1 ,  \sep \forall   n\in \N  \}$. So if we define   $\varepsilon_n=\text{sign}(y^*(e_n))$ then $ \varepsilon  \in \{1,-1\} $ for each $n\in \N$.\\ As a consequence, the mapping $T: \ell_{1} \llll \ell_{1}$ given by
$$
T((x_n))=(\varepsilon_nx_n) \sep  ((x_n)\in \ell_{1})
$$
is a linear surjective isometry on $\ell_{1}$ satisfying $T=T^{-1}$,  so $T^t=(T^t)^{-1}$.\\
Hence $T^t(y^*)(e_n)=y^*(T(e_n))=y^*(\varepsilon_ne_n)=1$, for each $n\in \N$, so $T^t(y^*)=u^*$; that is, $T^t(u^*)=y^*$.
So  $u^*(T(y_i))=T^tu^*(y_i)=y^*(y_i)>1- \rho_n(\varepsilon)$, for each $i\le n_0$.
Since $(y_i)_{i \le n} \in M_{\ell_{1}}^{n}$ and $T$ is a linear isometry  we have that  $(T(y_i))_{i \le n} \in M_{\ell_{1}}^{n}$. By using Theorem \ref{l1-AHSPLn} 
 there is $(x_i)_{i \le n} \in M_{\ell_{1}}^{n}$ such that 
$$
\|x_i-T(y_i)\| < \varepsilon, \sep  \forall i \le n \sem \text{and} \sem u^*(x_i)=1, \sep  \forall i \le n_0.
$$ 
As a consequence 
$$
\|T(x_i)-y_i\| < \varepsilon, \sep  \forall i \le n \sem \text{and} \sem y^*(T(x_i))=1, \sep  \forall i \le n_0.
$$
Since $(x_i)_{i \le n} \in M_{\ell_{1}}^{n}$, the element $(T(x_i))_{i \le n} $ also belongs to $M_{\ell_{1}}^{n}$. We showed that condition 3) in Proposition  \ref{pro-char} holds for $\ell_1$.
\end{proof}

\vskip5mm

Next result generalizes \cite[Theorem 4.7]{ADS}. We include  a proof for the sake of completeness.

\begin{proposition}
\label{Y-ani}
Let $n\in\N$ and    $\Gamma _n : ]0,1[ \llll ]0,1[$ be a function. Assume that            $Y$ is  a Banach space such that $Y=\overline{\cup \{Y_\alpha : \alpha \in \Lambda  \}}$, where $\{Y_\alpha : \alpha \in \Lambda  \}$  is a nested family of subspaces of $Y$ satisfying uniformly the  AHSp-$\ell_\infty^n$ with the function $\Gamma_n$. Then $Y$ has the AHSp-$\ell_\infty^n$ with the function 
\linebreak[4]
$\gamma_n (\varepsilon) =  \Gamma_n \bigl( \frac{\varepsilon}{2} \bigr)$.
\end{proposition}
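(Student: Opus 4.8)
The plan is to check Definition \ref{def-AHSPLn} directly for $Y$ with the witnessing function $\gamma_n(\varepsilon)=\Gamma_n(\varepsilon/2)$. First note that any function witnessing the AHSp-$\ell_\infty^n$ satisfies $0<\Gamma_n(\delta)<\delta$, so $0<\gamma_n(\varepsilon)<\varepsilon/2<\varepsilon$, as required. Fix $0<\varepsilon<1$, an element $(y_i)_{i\le n}\in M_Y^n$, a nonempty $A\subset\{1,\ldots,n\}$ and $y^*\in S_{Y^*}$ with $y^*(y_i)>1-\gamma_n(\varepsilon)$ for all $i\in A$. Since $A$ is finite, there is a fixed $\varepsilon_0>0$ with $y^*(y_i)>1-\gamma_n(\varepsilon)+\varepsilon_0$ for every $i\in A$; this slack will be what compensates for the rescaling performed below.

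The first step is to transport the data into one subspace $Y_\alpha$. Since the family is nested (hence directed) and $Y=\overline{\cup Y_\alpha}$, for any $\delta>0$ we can choose a single index $\alpha$ and vectors $\tilde y_i\in Y_\alpha$ ($i\le n$) with $\|\tilde y_i-y_i\|<\delta$. As $(y_i)_{i\le n}\in M_Y^n$, for every $(i_1,\ldots,i_k)\in\mathcal{I}_n$ we get $\|\sum_{j=1}^k(-1)^{j+1}\tilde y_{i_j}\|\le 1+k\delta\le 1+n\delta$, so putting $\hat y_i:=\tilde y_i/(1+n\delta)$ yields $(\hat y_i)_{i\le n}\in M_{Y_\alpha}^n$ and in particular $\|\hat y_i\|\le 1$. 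Since $\|\tilde y_i\|\le 1+\delta$, the triangle inequality gives $\|\hat y_i-y_i\|\le\|\hat y_i-\tilde y_i\|+\delta\le (1+\delta)n\delta+\delta$, so one can fix $\delta$ small enough that simultaneously $\|\hat y_i-y_i\|<\varepsilon/2$ for all $i\le n$ and $\delta<\varepsilon_0/(2n)$.

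Next I would build the norming functional on $Y_\alpha$. Let $g:=y^*|_{Y_\alpha}$ and $c:=\|g\|_{Y_\alpha^*}\le 1$. For $i\in A$, $g(\hat y_i)=\frac{1}{1+n\delta}\big(y^*(y_i)+y^*(\tilde y_i-y_i)\big)\ge\frac{1-\gamma_n(\varepsilon)+\varepsilon_0-\delta}{1+n\delta}$, and since $\|\hat y_i\|\le 1$ this also shows $c\ge g(\hat y_i)>0$; hence $\hat g:=g/c\in S_{Y_\alpha^*}$ and, as $c\le 1$, $\hat g(\hat y_i)\ge g(\hat y_i)$. The choice $\delta<\varepsilon_0/(2n)$ forces $\frac{1-\gamma_n(\varepsilon)+\varepsilon_0-\delta}{1+n\delta}>1-\gamma_n(\varepsilon)=1-\Gamma_n(\varepsilon/2)$ (after clearing denominators this reduces to $\varepsilon_0-\delta>n\delta(1-\gamma_n(\varepsilon))$, which holds because $1-\gamma_n(\varepsilon)<1$), so $\hat g(\hat y_i)>1-\Gamma_n(\varepsilon/2)$ for all $i\in A$. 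Now apply the AHSp-$\ell_\infty^n$ of $Y_\alpha$ with the function $\Gamma_n$ to $(\hat y_i)_{i\le n}\in M_{Y_\alpha}^n$, the set $A$, the functional $\hat g$ and tolerance $\varepsilon/2$: there is $(z_i)_{i\le n}\in M_{Y_\alpha}^n$ with $\|z_i-\hat y_i\|<\varepsilon/2$ for $i\le n$ and $\|\sum_{i\in A}z_i\|=|A|$. Since $Y_\alpha$ is a subspace of $Y$, membership in $M^n$ and the equality $\|\sum_{i\in A}z_i\|=|A|$ are unchanged when read in $Y$, and $\|z_i-y_i\|\le\|z_i-\hat y_i\|+\|\hat y_i-y_i\|<\varepsilon$, which is exactly the conclusion of Definition \ref{def-AHSPLn} for $Y$.

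I expect the main obstacle to be precisely the tension between two competing demands: to land inside $M_{Y_\alpha}^n$ one must shrink the approximants, but shrinking lowers the values $y^*(\tilde y_i)$ and can push them below the threshold $1-\Gamma_n(\varepsilon/2)$ needed to invoke the AHSp of $Y_\alpha$. The resolution is to exploit the \emph{strict} inequality $y^*(y_i)>1-\gamma_n(\varepsilon)$ to extract a fixed slack $\varepsilon_0$ and then take the approximation parameter $\delta$ small compared with $\varepsilon_0$; once this is arranged, the remaining estimates are routine applications of the triangle inequality.
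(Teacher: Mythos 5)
Your proposal is correct and follows essentially the same route as the paper's proof: approximate in a single $Y_\alpha$, rescale by $1/(1+n\delta)$ to land in $M_{Y_\alpha}^n$, use the strict-inequality slack to keep the rescaled vectors above the threshold $1-\Gamma_n(\varepsilon/2)$, restrict and normalize $y^*$ to $Y_\alpha$, and apply the AHSp of $Y_\alpha$ at tolerance $\varepsilon/2$. The only differences are cosmetic (the paper folds your $\varepsilon_0$ and $\delta$ into a single parameter $t<\frac{1}{n+1}\min\{\varepsilon/2,\min_{i\in A}(y^*(a_i)-1+\gamma_n(\varepsilon))\}$, and assumes the approximants lie in $B_Y$, where your bound $\Vert\tilde y_i\Vert\le 1+\delta$ is in fact the more careful statement).
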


\begin{proof} 
Given $0<\varepsilon<1$, we define $\gamma _n ( \varepsilon )= \Gamma_n \bigl( \frac{\varepsilon}{2} \bigr).$ 
Assume that $(a_i)_{ i \le n} \in M_{Y} ^n $   and that for some nonempty   set   $A \subset \{1,\ldots, n\}$ and $y^{*} \in S_{Y^{*}}$, it is satisfied that
$$
y^{*}(a_i)>1-\gamma_n (\varepsilon) , \sep  \forall  i\in A.
$$
Let us choose a real number $t$ such that
$$
0<t< \frac{1}{n+1}\min  \Bigl \{ \frac{ \varepsilon}{2}, \min  \bigl\{ y^*(a_i)-1+ \gamma_n (\varepsilon): i\in A \bigr\} \Bigr\}.
$$ 
By assumption there exist $\alpha_0\in \Lambda$  and $\{b_i: i \leq n\} \subset
B_{Y} \cap Y_{\alpha_0}$ satisfying
$$
\Vert  b_i - a_i \Vert  < t, \sep\ \ \forall i\le n.
$$
Now we define $y_i=\frac{b_i}{1+ nt }$ for each $i\leq n$. By using that $(a_i) _{ i \le n} \in M_Y ^n $ it is immediate to check that $(y_i)_{i \le n} \in M_{Y}^n \cap Y_{\alpha _0}$.   
It is clear that 
\begin{equation}
\label{y*yi}
\Vert  y_i-a_i  \Vert  \le \Bigl \Vert  \frac{b_i}{1+nt} - b_i \Bigr \Vert +  \Vert b_i-a_i \Vert 
< (n+1) t <\frac{\varepsilon}{2}, \sep  \text{for all} \sep i \le n.
\end{equation}
For each $i \in A$ we obtain that
\begin{equation}
\label{Ny}
y^*(y_i) >y^*(a_i)-(n+1) t>1-\gamma_n (\varepsilon)>0.
\end{equation}

Now we define the element $z^* \in Y_{\alpha_0}^{*}$  by
$$
z^*(z)=y^*(z) \seg  (z \in Y_{\alpha_0}),
$$
that satisfies   $z^* \in B_{ Y_{\alpha _0}^* }$.  From  \eqref{Ny} we know that  $z^* \ne 0$ and we also have that 
$$
\frac{z^*}{\|z^*\|}(y_i)=\frac{y^*}{\|z^*\|}(y_i)\geq y^*(y_i)>1- \Gamma_n \Bigl( \frac{\varepsilon}{2} \Bigr), \sep  \text{for all } i\in A.
$$
Since we assume that $Y_{\alpha _0} $ has the AHSp-$\ell_\infty ^n$,  there is $(z_i)_{i\le n} \in  M_{Y_{\alpha_0}}^n$ such that
$$
\|z_i-y_i\|<\frac{\varepsilon}{2} ,  \sep \text{for all }\sep  i \le n \sem \text{and}
        \sem  \bigl \Vert \sum _{ i \in A} z_i \bigr\Vert  = \vert A \vert .
        $$
By using also   \eqref{y*yi} we deduce that
$$
\Vert z_i - a_i \Vert \le   \Vert  z_i -  y_i \Vert +
\Vert  y_i - a_i \Vert < \frac{\varepsilon}{2} + \frac{\varepsilon}{2} =
\varepsilon, \seg \forall i \le n.
$$
This finishes the proof. 
\end{proof}

 As a consequence of Corollary \ref{co-ell1-AHSP-n}   and  Proposition \ref{Y-ani} we obtain that every space $L_1 (\mu)$ satisfies the  AHSp-$\ell_\infty^n$ for each natural number $n$, and the function $\rho_n$ that verifies the property for each natural number $n$ does not depend on $\mu$. In view of the characterization given in Theorem \ref{teo-char} we obtain the following result.

\begin{corollary}
\label{cor-L1-AHSP-lin}
For  each $n\in\N$ there is a function $\gamma_n: ]0,1[ \llll  ]0,1[$  such that  $L_1 (\mu)$ satisfies Definition \ref{def-AHSPLn} with such function, for any positive measure $\mu$. Hence, the pair $(\ell_\infty^n, L_1 (\mu) )$ has the BPBp for operators. Moreover, for each positive integer $n$ there is a function $\eta_n $ such that the pair $(\ell_\infty^n, L_1 (\mu) )$ satisfies  Definition \ref{def-BPBp} for such function,   for any positive measure $\mu$.
\end{corollary}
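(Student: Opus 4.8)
The plan is to deduce the statement from the facts already established for $\ell_1$ and $\ell_1^m$ in Corollary \ref{co-ell1-AHSP-n}, by realizing $L_1(\mu)$ as the closure of a directed union of finite-dimensional subspaces, each \emph{isometric} to some $\ell_1^m$, and then invoking Proposition \ref{Y-ani} followed by Theorem \ref{teo-char}. Concretely, for a finite family $\pi=\{A_1,\dots,A_m\}$ of pairwise disjoint measurable sets with $0<\mu(A_j)<\infty$, let $Y_\pi$ be the linear span of $\{\chi_{A_1},\dots,\chi_{A_m}\}$ inside $L_1(\mu)$. Since $\|\sum_j c_j\chi_{A_j}\|_1=\sum_j|c_j|\mu(A_j)$, the map $\sum_j c_j\chi_{A_j}\mapsto (c_j\mu(A_j))_{j\le m}$ is a linear isometry of $Y_\pi$ onto $\ell_1^m$. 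The collection of all such $Y_\pi$ is directed upward: given two families $\pi$ and $\pi'$, the nonempty sets of positive finite measure among the atoms of the algebra generated by $\pi\cup\pi'$ form a common refinement whose span contains both $Y_\pi$ and $Y_{\pi'}$ (modulo null sets, which vanish in $L_1$).

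Next I would check that $\bigcup_\pi Y_\pi$ is dense in $L_1(\mu)$, i.e. that the finitely supported simple functions are dense, even when $\mu$ is not $\sigma$-finite. Given $f\in L_1(\mu)$, the set $\{|f|>0\}=\bigcup_{k}\{|f|\ge 1/k\}$ is $\sigma$-finite because $\mu(\{|f|\ge 1/k\})\le k\|f\|_1<\infty$ for every $k$; on this $\sigma$-finite set $f$ is approximable in $L_1$-norm by simple functions supported on finite-measure sets. Hence $L_1(\mu)=\overline{\bigcup_\pi Y_\pi}$. Now by Corollary \ref{co-ell1-AHSP-n} there is a function $\gamma_n$, independent of $m$, with which every $\ell_1^m$ has the AHSp-$\ell_\infty^n$; transporting through the isometries above, every $Y_\pi$ has the AHSp-$\ell_\infty^n$ with this same $\gamma_n$. (It is essential here to use the \emph{uniform-in-dimension} statement of Corollary \ref{co-ell1-AHSP-n} and not merely that finite-dimensional spaces have the property, since the latter modulus could degenerate with the dimension and spoil uniformity in $\mu$.) Thus the hypotheses of Proposition \ref{Y-ani} hold with $\Gamma_n=\gamma_n$ — the proof of that proposition only uses that the family of subspaces is directed, which is the case here — and we conclude that $L_1(\mu)$ has the AHSp-$\ell_\infty^n$ with the function $\varepsilon\mapsto\gamma_n(\varepsilon/2)$, a function that does not depend on $\mu$.

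Finally, Theorem \ref{teo-char} immediately gives that $(\ell_\infty^n,L_1(\mu))$ has the BPBp for operators, and its quantitative half shows that a valid modulus is $\eta_n(\varepsilon)=\gamma_n\!\big(\varepsilon/(2(n+1))\big)^2$, which, like $\gamma_n$, is independent of $\mu$; this yields the ``Moreover'' clause. The only genuinely delicate point in the whole argument is the density step of the previous paragraph — ensuring that for an arbitrary positive measure $\mu$ the subspaces $Y_\pi$ exhaust $L_1(\mu)$ in norm — together with the careful propagation of the moduli from Corollary \ref{co-ell1-AHSP-n} through Proposition \ref{Y-ani} and Theorem \ref{teo-char} so that none of them depends on $\mu$. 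Everything else is routine bookkeeping.
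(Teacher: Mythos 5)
Your proposal is correct and follows essentially the same route as the paper, which deduces the corollary from Corollary \ref{co-ell1-AHSP-n} and Proposition \ref{Y-ani} followed by Theorem \ref{teo-char}; you merely supply the details the paper leaves implicit (the explicit subspaces $Y_\pi$ isometric to $\ell_1^m$, their directedness, density for arbitrary positive measures, and the tracking of the moduli). Your observation that the family is directed rather than nested, and that the proof of Proposition \ref{Y-ani} only needs directedness, is accurate and is a point the paper glosses over.
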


\vskip15mm

\end{document}